\documentclass{article}%
\usepackage{amsmath}
\usepackage{amsfonts}
\usepackage{amssymb}
\usepackage{graphicx}%
\providecommand{\U}[1]{\protect\rule{.1in}{.1in}}
\newtheorem{theorem}{Theorem}[section]

\newtheorem{axiom}{Theorem}

\newtheorem{corollary}[theorem]{Corollary}

\newtheorem{lemma}[theorem]{Lemma}

\newtheorem{proposition}[theorem]{Proposition}

\newenvironment{proof}[1][Proof]{\noindent\textbf{#1.} }{\ \rule{0.5em}{0.5em}}
\begin{document}

\title{Polycyclic groups and profinite isomorphism }
\author{Dan Segal}
\maketitle

\section{\qquad\bigskip Introduction}

There is no algorithm to decide whether an arbitrary finitely presented group
has nontrivial profinite completion, or equivalently, a nontrivial finite
image \cite{BW}. Within the universe of residually finite groups, however, the
opposite is true: given that the finitely presented group $G$ is residually
finite, we note that $G$ has a nontrivial finite image if and only if some
generator of $G$ is not the identity; and this is decidable by the naive
routine of simultaneously listing (a) finite images and (b) consequences of
the relations. Thus we can decide whether or not the profinite completion
$\widehat{G}$ is the trivial group.

The \emph{profinite isomorphism problem} asks: given two finitely presented
groups, do they have isomorphic profinite completions? This is of course more
demanding than the `profinite triviality problem', and is undecidable even
within the class of residually finite groups \cite{BW2}.

It has long been known that among finitely presented residually finite groups,
the polycyclic groups are particularly tractable. Around 40 years ago two
major results were established:

\begin{axiom}
\label{iso}\emph{\cite{S3} }The isomorphism problem for virtually polycyclic
groups is decidable.
\end{axiom}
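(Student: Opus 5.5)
The plan follows the strategy of Grunewald--Segal: reduce the isomorphism problem to algorithmic questions about arithmetic groups and their orbits on rational varieties. We can run two procedures in parallel. One lists all Tietze transformations between the two given finite presentations, so it halts if the groups are isomorphic. The real work is a procedure that halts on non-isomorphic pairs. Equivalently, we want an algorithm that, given $G$ and $H$, decides isomorphism outright.

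\textbf{Step 1 (structural data).} From a finite presentation of a virtually polycyclic group $G$ we can effectively compute a great deal of structure. This includes a polycyclic normal subgroup of finite index, the Fitting subgroup $F$ (which is nilpotent), and the finite quotient $G/F$ up to a torsion-free part. We can also compute a Mal'cev basis of $F$ and a faithful representation of $G$ in $\mathrm{GL}_n(\mathbb{Z})$ (Auslander--Swan). All of this uses the standard effective theory: we can solve the word problem, compute normal closures, and find finite-index subgroups. We can further reduce, by taking characteristic subgroups of finite index uniformly, to the situation where isomorphisms are controlled by a finite amount of data.

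\textbf{Step 2 (reduction to orbit problems).} Fix a suitable characteristic nilpotent subgroup $N$ (for instance the Fitting subgroup, or its isolator). Build its rational Mal'cev completion $N_{\mathbb{Q}}$ with Lie algebra $L$. The structure of $G$ is then encoded by three pieces of data: the Lie algebra $L$ with its lattice, the finite group $G/N$ together with its action by automorphisms, and an extension cocycle. An isomorphism $G\to H$ corresponds to an element of $\mathrm{Aut}(L)(\mathbb{Q})$ that carries one lattice and one set of structure data to the other. In practice this means finitely many orbit questions. We ask whether two rational points lie in the same orbit of an arithmetic subgroup $\Gamma$ of the algebraic group $\mathcal{G}=\mathrm{Aut}(L)$, acting on an algebraic variety defined over $\mathbb{Q}$. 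The cohomology classes are handled by finiteness of $H^2$ data modulo the relevant action.

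\textbf{Step 3 (arithmetic groups).} We then need two things. First, for a $\mathbb{Q}$-group $\mathcal{G}$ and rational representation $V$, we must decide whether $v,w\in V_{\mathbb{Z}}$ are $\Gamma$-conjugate. Second, we must compute generators of stabilisers, which are again arithmetic. Both follow from reduction theory (Borel--Harish-Chandra). There are finitely many $\Gamma$-orbits on integral points of a closed $\mathcal{G}(\mathbb{R})$-orbit, and a Siegel set gives an effective bound on a representative, so conjugacy reduces to a finite search. The non-reductive part is handled through the Levi decomposition: the unipotent radical is treated by nilpotent-group methods, and the reductive part by effective reduction theory. Finally, the finite group $G/N$ and the extension class are handled by enumerating finitely many possibilities and checking each one.

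The main obstacle is Step 3. Reduction theory exists abstractly, but making it effective is the hard part. One needs explicit fundamental domains, and bounds on the heights of conjugating elements, for an arbitrary (non-reductive) $\mathbb{Q}$-group given by equations. The first thing I would try is to reduce to reductive groups via the Levi decomposition. Then I would use the effective construction of Siegel sets in $\mathrm{GL}_n$ together with the Hermite--Minkowski reduction of quadratic forms, as in Grunewald--Segal's work on algorithms for arithmetic groups.
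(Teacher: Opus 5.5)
Your Step 2 only works for virtually nilpotent groups, and that is a genuine gap. For a general polycyclic group no nilpotent normal subgroup has finite index, because $G/\mathrm{Fit}(G)$ is virtually free abelian of positive rank. For example, for $G=\mathbb{Z}^2\rtimes_A\mathbb{Z}$ with $A$ hyperbolic one has $\mathrm{Fit}(G)=\mathbb{Z}^2$ and $G/\mathrm{Fit}(G)\cong\mathbb{Z}$. The same error already appears in Step 1, where $G/F$ is called finite. So $G$ is not described by ``lattice in $L$, finite group acting, cocycle'', and an isomorphism $G\to H$ is not an element of $\mathrm{Aut}(L)(\mathbb{Q})$ matching finitely many data.

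If you push the idea through anyway, an isomorphism is given by three things: its restriction $\theta_1$ to $N$, the induced map $\theta_2$ on the infinite group $G/N$, and correcting elements $v_i\in N^{\dag}$ for lifts of generators. This is the discrete analogue of Proposition \ref{mainequivalence}. Then there are infinitely many $\theta_2$, and the relevant $H^2$ groups (e.g.\ $H^2(\mathbb{Z}^k,\cdot)$) are infinite. Moreover, for each $\theta_2$ the conditions on $(\theta_1,v_i)$ form a system of polynomial equations to be solved in integers, for which there is no general decision method. Nothing here yields a procedure that halts on non-isomorphic pairs. An arbitrary Auslander--Swan embedding does not rescue this, since isomorphisms need not be induced by conjugation in it.

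The missing idea is the \emph{rigid} embedding that the paper identifies as the basis of Theorem \ref{iso}. First pass to a characteristic (lattice-)splittable subgroup $G_0$ of finite index, and pick $C\in\mathcal{X}^{\ast}(G_0,N)$; there are only finitely many conjugacy classes of these. Form the semisimple splitting $G_0\le M_C\rtimes T_C\le\mathrm{Hol}(M_C)$, with $M_C$ a $\mathcal{T}$-group and $T_C$ a free abelian group of semisimple automorphisms. Then embed via $\beta_{M_C}$ in $\mathrm{GL}_n(\mathbb{Z})$. Every isomorphism $G_0\to G_0^{\dag}$ sending $C$ to $C^{\dag}$ then extends to the splittings and is induced by conjugation by a matrix (compare Proposition \ref{iso-conj}). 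So isomorphism of the $G_0$'s becomes conjugacy of the linear groups $G_0\beta$ and $G_0^{\dag}\beta^{\dag}$, i.e.\ an orbit problem for an arithmetic group.

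Only at this point does your Step 3 enter. It is essentially the Grunewald--Segal theory of algorithms for arithmetic groups, which can be quoted, so the real difficulty lies in the reduction, not there. You would then still need to pass from $G_0$ back to $G$. That requires an explicit handle on $\mathrm{Aut}(G_0)$, e.g.\ generators obtained as a stabiliser in that arithmetic group, before the finitely many extension data can be compared under the correct action.
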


\begin{axiom}
\label{genus}\emph{\cite{GPS} }The (relative) profinite genus of a virtually
polycyclic group is finite.
\end{axiom}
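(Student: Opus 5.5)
The plan is to follow the arithmetic-group strategy: turn the finiteness of the genus into the finiteness of a class number (double cosets of an adelic group), and then apply Borel's finiteness theorem. Throughout, $G$ is fixed, and $H$ ranges over virtually polycyclic groups with $\widehat H\cong\widehat G$.

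\emph{Step 1: invariants and reduction to a convenient finite-index subgroup.} First I would record what $\widehat G$ determines: the set of finite images, the Hirsch length, the maximal finite normal subgroup, and, for each $n$, the (finitely many) subgroups of index $n$. Every polycyclic-by-finite group has a characteristic subgroup of finite index that is torsion-free and nilpotent-by-free-abelian, with index bounded in terms of finite-image data. So I can choose an integer $m$, depending only on $G$, such that the intersection $H_0$ of all subgroups of index at most $m$ in $H$ has this form, and such that $\widehat{H_0}\cong\widehat{G_0}$ as closed subgroups corresponding under $\widehat H\cong\widehat G$. There are only finitely many extensions of a given group by a finite group of bounded order with prescribed finite quotients, since the relevant cohomology groups of polycyclic groups are finitely generated. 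Hence it suffices to prove finiteness of the genus for $G_0$, together with its finite action data. For the nilpotent part I would quote Pickel's theorem in the same spirit.

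\emph{Step 2: algebraic hulls.} Embed $G_0$ in its algebraic hull $\mathbf A$, a $\mathbb Q$-defined linear algebraic group (Mostow), with unipotent radical $\mathbf U$ whose $\mathbb Q$-points are the Mal'cev completion of the Fitting subgroup. The profinite data determine $\mathbf A(\mathbb Q_p)$ up to isomorphism for every $p$, and $G_0$ and $H_0$ are Zariski dense in groups with the same completions at every prime. I expect to show that $\mathbf A_H$ is a $\mathbb Q$-form of $\mathbf A_G$ that is locally trivial at all finite places. By finiteness of the appropriate Galois-cohomology kernel for linear algebraic groups over $\mathbb Q$ (Borel--Serre), only finitely many such forms occur.

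\emph{Step 3: class number.} With the hull fixed, each $H_0$ is a subgroup of $\mathbf A(\mathbb Q)$ commensurable with an arithmetic subgroup and with closure in $\mathbf A(\mathbb A_f)$ isomorphic to $\widehat{G_0}$. Groups of this kind, up to isomorphism, correspond to orbits of $\mathrm{Aut}(\mathbf A)(\mathbb Q)$ on $\mathrm{Aut}(\mathbf A)(\mathbb A_f)/\mathrm{Stab}(\widehat{G_0})$. The stabiliser is open, so these orbits form a double coset space of class-number type. Borel's finiteness theorem then gives finiteness.

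The main obstacle is Steps 2--3: making the correspondence precise. One must check that an abstract isomorphism $\widehat{H_0}\cong\widehat{G_0}$ really transports to an element of the adelic points of the automorphism group of the hull. This needs the congruence-type rigidity of polycyclic groups: every finite-index subgroup contains a congruence subgroup in the hull. One must also ensure that $\mathrm{Aut}(\mathbf A)$ is itself a linear algebraic group to which Borel's theorem applies. I would first try to use the semidirect decomposition $\mathbf A=\mathbf U\rtimes\mathbf T$ with $\mathbf T$ a torus. This reduces $\mathrm{Aut}(\mathbf A)$ to an algebraic subgroup of $\mathrm{Aut}(\mathbf U)\times\mathrm{Aut}(\mathbf T)$, where algebraicity is clear.
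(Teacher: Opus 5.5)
The paper does not prove this theorem; it quotes it from \cite{GPS}. So I judge your outline on its own terms. There is a genuine gap in Step~3. Its premises are that $H_0$ is commensurable with an arithmetic subgroup of its hull, and that the stabiliser of $\overline{G_0}$ in $\mathrm{Aut}(\mathbf A)(\mathbb A_f)$ is open. Both are false in general, and the asserted correspondence with the genus fails.

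Here is a counterexample. Take $G=\mathbb Z^2\rtimes_\gamma\mathbb Z$ with $\gamma\in\mathrm{SL}_2(\mathbb Z)$ hyperbolic, and let $\mathbf T$ be the one-dimensional, $\mathbb Q$-anisotropic Zariski closure of $\langle\gamma\rangle$.
\begin{itemize}
\item The Mostow hull is $\mathbf A=(\mathbb G_a^2\rtimes\mathbf T)\times\mathbb G_a$, and $G=\{(v,\gamma^k,k)\}$ sits in it as a graph. Incidentally, $\mathbf U=\mathbb G_a^3$ is the Mal'cev completion of the group $M=N\cdot u(C)$ of the semisimple splitting, not of $\mathrm{Fit}(G)$.
\item Arithmetic subgroups of $\mathbf A$ have Hirsch length $4>3=h(G)$, so $G$ is not commensurable with one.
\item Automorphisms of $\mathbf A$ act on $\mathbf A/\mathbf U\cong\mathbf T$ only through $t\mapsto t^{\pm1}$ at each prime, because they permute the weights of $\mathbf T$ on $\mathrm{Lie}\,\mathbf U$.
\item Rescale the last factor by $\lambda\in\widehat{\mathbb Z}^*$. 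This sends $\overline G=\{(v,\gamma^\mu,\mu):\mu\in\widehat{\mathbb Z}\}$ to $\{(v,\gamma^\mu,\lambda\mu)\}$. Nothing algebraic can compensate by $\gamma^\mu\mapsto\gamma^{\lambda\mu}$.
\item Hence the stabiliser's image under the character $\mathrm{Aut}(\mathbf A)\to\mathbb G_m$ (the action on $\mathbf A/(\mathbb G_a^2\rtimes\mathbf T)\cong\mathbb G_a$) lies in $\prod_p\{\pm1\}$. This has infinite index in $\widehat{\mathbb Z}^*$, so the stabiliser is not open.
\item Your double coset space then surjects onto $\mathbb Q^*\backslash\mathbb A_f^*/\prod_p\{\pm1\}$, which is uncountable.
\item Moreover, for $\lambda\in\widehat{\mathbb Z}^*\smallsetminus\{\pm1\}$ the rescaled $\overline G$ meets $\mathbf A(\mathbb Q)$ only in $N$. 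So it is not the closure of any group in the genus.
\end{itemize}

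What is missing is the rigidity of algebraic tori, which rests on genuine number theory (\cite{GS4}). It says: if $T,T^\dag\le\mathrm{GL}_n(\mathbb Z)$ are diagonalizable and $\overline T^{x}=\overline{T^\dag}$ with $x\in\mathrm{GL}_n^\infty$, then already $T^x=T^\dag$ (Lemma~\ref{diag}). Equivalently, every isomorphism $\widehat G\to\widehat{G^\dag}$ maps $\overline NG$ onto $\overline{N^\dag}G^\dag$ (Corollary~\ref{TandTdag}), and so induces a \emph{rational} isomorphism $G/N\to G^\dag/N^\dag$. This is exactly what excludes the irrational $\lambda$ above. Only after imposing it does the problem become one of genuine class-number type.

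Before that, even producing adelic elements needs the rigid linearisation that you flag as the main obstacle but do not supply. In \cite{GPS} this is the semisimple splitting $MT$ with the canonical embedding $\beta_M$. Under it, an isomorphism matching the finitely many classes in $\mathcal X^*(G,N)$ (Lemma~\ref{C-choices}) becomes conjugation by some $x\in\mathrm{GL}_n^\infty$ (Proposition~\ref{iso-conj}).

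There are also two smaller slips.
\begin{itemize}
\item $\mathrm{Aut}(\mathbf T)$ is an arithmetic, discrete group, not an algebraic one. Algebraicity of $\mathrm{Aut}(\mathbf A)$ must therefore come from the faithful action of $\mathbf T$ on $\mathbf U$, not from $\mathrm{Aut}(\mathbf U)\times\mathrm{Aut}(\mathbf T)$.
\item In Step~1, finiteness of extensions of $H_0$ by a bounded finite group $Q$ is not a matter of finite generation of cohomology. $H^2(Q,\mathrm{Z}(H_0))$ is finite, but you also need the homomorphisms $Q\to\mathrm{Out}(H_0)$ to fall into finitely many conjugacy classes, which is an arithmetic-group input.
\end{itemize}
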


The former means that there exists a uniform algorithm that decides, given two
finite presentations of virtually polycyclic groups, whether the two groups
are isomorphic. (The fact that the presented groups are virtually polycyclic
has to be \emph{given}: it is in principle undecidable, by the Adian-Rabin
Theorem, \cite{LS} Chap. 4, \S 4.)

Theorem \ref{genus} asserts that the class of all virtually polycyclic groups
having a given profinite completion consists of finitely many isomorphism
classes. In fact we can remove the word \textquotedblleft
relative\textquotedblright: \emph{the class of all finitely generated
residually finite groups having the same profinite completion as a given
virtually polycyclic group (consists of virtually polycyclic groups and)
contains only finitely many non-isomorphic groups}. This follows from the
theorem together with the main result of \cite{SW}, \S 4.

The point of this note is to fill an obvious gap by establishing

\begin{axiom}
\label{profiso}The profinite isomorphism problem for virtually polycyclic
groups is decidable.
\end{axiom}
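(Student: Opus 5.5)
The plan is to run two semi-decision procedures in parallel: one that halts if $\widehat{G}\not\cong\widehat{H}$ and one that halts if $\widehat{G}\cong\widehat{H}$. The first half is routine. For finitely generated groups, $\widehat{G}\cong\widehat{H}$ if and only if $G$ and $H$ have the same finite quotients up to isomorphism. Given a finite group $Q$ and a finite presentation of $H$, we can list the finitely many maps from the generators of $H$ into $Q$. We then test which of them satisfy the relations and are surjective, so "is $Q$ a quotient of $H$?" is decidable. Hence we can enumerate the finite quotients of $G$ (by enumerating finite groups $Q$ and testing) and check each against $H$, and symmetrically. If the completions differ, this search halts.

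The substance is a finite certificate of profinite isomorphism. I would first make everything effective using the machinery behind Theorem \ref{iso}. From presentations of virtually polycyclic groups one can compute:
\begin{itemize}
\item a polycyclic presentation;
\item a characteristic finite-index normal subgroup $G_0$ that is torsion-free and nilpotent-by-free-abelian;
\item its Fitting subgroup;
\item the rational Mal'cev completion of the Fitting subgroup;
\item the action of $G$ on that completion.
\end{itemize}
This presents $G$ as a subgroup of the $\mathbb{Q}$-points of a linear algebraic group $\mathcal{G}$, computed in the style of the Grunewald--Segal proof of Theorem \ref{genus}. In that proof the genus of $G$ is described by a double coset space $\mathcal{G}(\mathbb{Q})\backslash \mathcal{G}(\mathbb{A}_f)/\overline{\Gamma}$, with $\mathbb{A}_f$ the finite adeles and $\overline{\Gamma}$ the closure of the image of $G$. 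Finiteness of the genus comes from finiteness of this class set. I would aim to show that this description is effective: one can compute a finite list $G=G_1,\dots,G_k$ of finite presentations representing every isomorphism class in the profinite genus of $G$. Given that, the algorithm is immediate. The remark after Theorem \ref{genus} (via \cite{SW}) shows that any finitely generated residually finite $H$ with $\widehat{H}\cong\widehat{G}$ is itself virtually polycyclic and so among the $G_i$. So $\widehat{H}\cong\widehat{G}$ if and only if $H\cong G_i$ for some $i$, and each such test is decidable by Theorem \ref{iso}.

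If the genus computation cannot be made fully effective, my fallback is a bounded local criterion. I would seek a computable integer $m=m(G)$ with the following property: $\widehat{G}\cong\widehat{H}$ if and only if
\begin{itemize}
\item $G/G^{m}$-type characteristic finite quotients of $G$ and $H$ (for a suitable characteristic series) are isomorphic compatibly with the induced actions, and
\item the associated $\mathbb{Q}$-groups are isomorphic over $\mathbb{Z}_p$ for all $p$.
\end{itemize}
The second condition should reduce to finitely many primes (those dividing $m$) together with a generic $\mathbb{Z}_p$-isomorphism statement valid for almost all $p$. At each of the finitely many bad primes it is a decidable question about $\mathbb{Z}_p$-points of algebraic groups, settled to finite precision by a Hensel-type argument.

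The main obstacle is precisely this effectivity step. One must bound, computably in terms of the data of $G$, the finite level at which profinite (equivalently adelic) isomorphism is detected. This means controlling the bad primes and the Hensel precision, or equivalently computing generators for the class set. I would try first to extract explicit bounds from the arithmetic-group arguments underlying Theorem \ref{genus}: the finiteness of class numbers together with effective reduction theory for arithmetic subgroups of $\mathcal{G}(\mathbb{Q})$.
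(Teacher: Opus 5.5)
\textbf{There is a genuine gap.} Your first procedure, which halts when $\widehat{G}\not\cong\widehat{H}$, is correct and matches the paper's Procedure 1. The gap is in the positive half, which is the whole content of the theorem.

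\textbf{Your main route relies on something that is not known.} It needs an algorithm that outputs finite presentations of representatives of the entire profinite genus of $G$. The paper says explicitly that, even granting Theorems \ref{iso} and \ref{profiso}, there is no known procedure to compute the class number. A computable list of genus representatives, combined with Theorem \ref{iso} to discard duplicates, would compute exactly that number. The finiteness in Theorem \ref{genus} comes from non-effective arithmetic: finiteness of class numbers and reduction theory. So ``making it effective'' is an open problem, not a step you can assume.

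\textbf{Your fallback is a wish, not an argument.} It posits a computable level $m$ and a reduction to finitely many bad primes plus Hensel precision, but gives no mechanism for either, and you yourself flag this as the unresolved obstacle. It also misdescribes the condition. Profinite isomorphism is not local isomorphism of associated algebraic groups prime by prime. The free abelian quotient $G/N$ has closure $\widehat{\mathbb{Z}}^d$ acting at all primes at once. A priori, an isomorphism $\widehat{G}\to\widehat{H}$ sends generators to elements whose action on $\overline{N^{\dag}}$ looks like $\alpha^{x}$ with $x\in\widehat{\mathbb{Z}}$. The resulting conditions are therefore exponential, not polynomial.

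\textbf{What is missing are the two ideas the paper uses to avoid any effective bound.}

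First, a rigidity step (Proposition \ref{finiteexrtrabit}). After passing to characteristic lattice-splittable subgroups $G_0,G_0^{\dag}$ and composing with an inner automorphism, one may assume $(\overline{N}G)\Theta=\overline{N^{\dag}}G^{\dag}$. Then $\Theta$ induces an isomorphism of the \emph{discrete} groups $G/N\to G^{\dag}/N^{\dag}$, and such isomorphisms can be enumerated. The proof of this step runs as follows:
\begin{itemize}
\item use the semisimple splitting $MG=M\rtimes T$ with $T$ diagonalizable;
\item apply Lemma \ref{diag}: if the closures of diagonalizable arithmetic groups are conjugate in $\mathrm{GL}_n^\infty$, then the groups themselves are conjugate (this is where the serious Grunewald--Segal number theory enters);
\item pass from $G_0$ to $G$ by an $H^1$-vanishing argument.
\end{itemize}

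Second, once $\theta_2$ is fixed, the remaining unknowns form a polynomial system. These unknowns are the isomorphism $\theta_1:\overline{N}\to\overline{N^{\dag}}$ and the correction elements $v_i\in\overline{N^{\dag}}$. Use the canonical embedding, and the lattice property that makes $\log$ polynomial and $\overline{N\beta}$ linearly defined. This expresses the conditions as a genuine polynomial Diophantine system $\mathcal{F}(\theta)$ over $\widehat{\mathbb{Z}}$. Ax's theorem (Theorem \ref{ax}) then decides its local solvability uniformly over all primes. That is precisely the ``bad primes plus Hensel precision'' issue you were trying to control by hand, and it needs no computable bound on $m$.
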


That is, there exists a uniform algorithm that decides, given two finite
presentations of virtually polycyclic groups, whether the two groups have
isomorphic profinite completions.

Two groups have isomorphic profinite completions if and only if they have the
same finite images (`isomorphism' of profinite completions is supposed to be
\emph{topological} isomorphism; for virtually polycyclic groups this is the
same as abstract isomorphism, cf. \cite{S1} Chap. 10, Prop. 4). Thus there is
a trivial procedure that will determine the \emph{negation} of profinite
isomorphism: if $\widehat{G}\ncong\widehat{H}$ we eventually find
$m\in\mathbb{N}$ such that $G/G^{m}\ncong H/H^{m}$. However to establish a
positive result is harder. This is the opposite of the situation with the
original isomorphism problem, where an isomorphism, if one exists, will
eventually be found, while non-isomorphism is much harder to establish.

Even given Theorems \ref{iso} and \ref{profiso}, there is no known procedure
to determine the \emph{class number} of a polycyclic group, that is the
cardinality of its profinite genus. This number is related, in a messy way, to
the class number of a certain algebraic group (cf. \cite{S1} Chapters 9, 10
and \cite{PR} Chapter 8, where the authors write \textquotedblleft...
computation of class number is a difficult, even hopeless, problem in its most
general formulation\textquotedblright\ ). It is tempting to guess that at
least one of the three algorithmic problems -- `isomorphism', `profinite
isomorphism', `class number' -- should be reducible to the conjunction of the
other two; but I don't see it.

A guiding principle in number theory is that `local' problems are supposed to
be easier than `global' problems. To prove Theorem \ref{profiso}, we use the
methodology of Theorems \ref{iso} and \ref{genus} to reduce the question to a
\emph{Diophantine problem}; the result then follows by the following very
general result of James Ax. Let us call a finite family of polynomial
equations in finitely many variables with coefficients in $\mathbb{Z}$ a
\emph{Diophantine system}. The system is \emph{locally solvable} if it is
solvable over $\mathbb{Z}_{p}$ for every prime $p$.

\begin{axiom}
\label{ax}\emph{\cite{A} }The local solvability of a given Diophantine system
is algorithmically decidable.
\end{axiom}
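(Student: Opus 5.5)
Since Theorem \ref{ax} is quoted from \cite{A}, I only outline how I would prove it. The difficulty is not any single prime but the quantifier "for every prime $p$". So I split the primes into a finite, explicitly computable set of small primes, handled one at a time, and a cofinite set of large primes, handled uniformly. Throughout, a Diophantine system $\Sigma$ is identified with the existential sentence $\exists \mathbf{x}\,\bigwedge_i f_i(\mathbf{x})=0$. Local solvability is then the statement that this sentence holds in $\mathbb{Z}_p$ for all $p$.

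\emph{Step 1 (a single prime).} For a fixed $p$, solvability of $\Sigma$ over $\mathbb{Z}_p$ is decidable. I would get this from Greenberg's theorem: there is a computable $N=N(\Sigma,p)$ such that solvability modulo $p^N$ implies solvability over $\mathbb{Z}_p$. Alternatively, Macintyre's or Ax--Kochen's decidability of $\mathrm{Th}(\mathbb{Q}_p)$ applies, since $\mathbb{Z}_p$ is definable in $\mathbb{Q}_p$. In either case the check reduces to a finite search, or to a decision procedure, once $p$ is given.

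\emph{Step 2 (large primes, via Ax--Kochen--Ershov).} By the Ax--Kochen--Ershov principle, the theory of henselian valued fields of residue characteristic $0$ is determined by the theories of the residue field and the value group. Applied to a non-principal ultraproduct, this says that $\prod\mathbb{Z}_p/\mathcal{U}$ and $\prod\mathbb{F}_p[[t]]/\mathcal{U}$ are elementarily equivalent. For existential sentences one gets more: by Hensel's lemma together with a compactness argument, there is a sentence $\sigma_\Sigma$ in the language of rings such that $\Sigma$ is solvable over $\mathbb{Z}_p$ if and only if $\mathbb{F}_p\models\sigma_\Sigma$, for all $p\ge p_0$. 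The crucial point is effectivity, namely computing $p_0$ and $\sigma_\Sigma$. The relevant theories are recursively axiomatized, so this equivalence is a theorem of a recursively axiomatized theory augmented by the axioms "the residue characteristic is not $q$" for all $q$. By compactness only finitely many of these axioms are used in the proof. Enumerating proofs until one is found therefore produces $\sigma_\Sigma$ and a bound $p_0$ effectively.

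\emph{Step 3 (finite fields).} It remains to decide whether $\mathbb{F}_p\models\sigma_\Sigma$ for every prime $p\ge p_0$. Here I would use Ax's analysis of pseudo-finite fields. Up to a finite, computable exceptional set, the set of primes $p$ with $\mathbb{F}_p\models\sigma$ is a union of Frobenius classes in an explicitly computable Galois extension $K/\mathbb{Q}$. The ingredients are Lang--Weil estimates, which give points on absolutely irreducible varieties over $\mathbb{F}_p$, together with elimination down to statements about which polynomials have roots. By Chebotarev, every conjugacy class of $\mathrm{Gal}(K/\mathbb{Q})$ occurs as a Frobenius class for infinitely many $p$. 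Hence "$\sigma_\Sigma$ holds for all large $p$" holds exactly when the relevant union of classes is the whole group, which is a finite check. The finitely many remaining primes, those below $p_0$ and the exceptional primes, are handled by Step 1.

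The main obstacle is Step 2, specifically making the transfer principle effective with a computable $p_0$. Ax--Kochen--Ershov alone gives only the existence of $p_0$, so effectivity has to come from the recursive axiomatizability and proof-enumeration argument described above. Step 3 also needs care, since Lang--Weil is used there to make the description of $\{p:\mathbb{F}_p\models\sigma\}$ explicit.
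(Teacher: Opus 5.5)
The paper gives no proof of this statement; it is simply imported from Ax \cite{A}. Your sketch is a correct outline of the standard argument behind Ax's theorem: single primes via Greenberg or the decidability of $\mathrm{Th}(\mathbb{Q}_p)$, an effective Ax--Kochen--Ershov transfer to a residue-field sentence $\sigma_\Sigma$ for $p\ge p_0$ found by enumerating proofs from the recursively axiomatized theory of henselian valued fields of residue characteristic $0$ with $\mathbb{Z}$-group value group, and a Frobenian/Chebotarev analysis of $\{p:\mathbb{F}_p\models\sigma_\Sigma\}$. The one point to state more carefully is where $\sigma_\Sigma$ comes from: the relative completeness part of Ax--Kochen--Ershov (every sentence is equivalent modulo that theory to a residue-field sentence), rather than Hensel's lemma plus compactness applied to ultraproducts, and it is exactly this provable form that your proof-enumeration step needs.
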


Ax remarks on the contrast between this fact and the (now known)
\emph{unsolvability} of Hilbert's 10th problem. The possibility of proving
Theorem \ref{iso} rested on the reduction of that question not to any old
Diophantine problem, but to one of a very special kind: namely an orbit
problem for an arithmetic group (see \cite{S1} Chapter 8, \S D). From this
point of view, Theorem \ref{profiso} should be seen as the `easy cousin' of
Theorem \ref{iso}: apart from the group theory (and, to be fair, some
nontrivial number theory coming from the proof of Theorem \ref{genus}), it
only needs Theorem \ref{ax}.

The special case of all three theorems \ref{iso} - \ref{profiso} relating to
$\mathcal{T}$-groups -- torsion-free finitely generated nilpotent groups -- is
discussed in Section 1 of \cite{GS3}; in that context, the group theory
question translates quickly to a corresponding arithmetical question via the
Lie algebra. The analogous reduction in the general polycyclic case is harder;
but much of the hard work has essentially been done, in the course of
establishing Theorems \ref{iso} and \ref{genus}.

Both of those theorems proceed by first embedding a virtually polycyclic group
into $\mathrm{GL}_{n}(\mathbb{Z})$ in a particularly rigid way: the question
of isomorphism between abstract groups is thereby translated into one of
conjugacy between subgroups of a linear group. The same method could be
adapted to prove Theorem \ref{profiso}, but the flexibility provided by
Theorem \ref{ax} allows us to specify an isomorphism more directly.

\section{Strategy of the proof}

In general, I use the conventions of \cite{S3}: a group $G$ is said to be
\emph{given} if a finite presentation of $G$ is given; an element of $G$ is
given if it is given as a word in the given generators, and a subgroup is
given if it is specified by finitely many generators, each of which is given.

For some frequently used notation see the end of this section.

\medskip

Certain polycyclic groups are particularly easy to specify; they are called
\emph{splittable,} and defined in\emph{ }\cite{S1} Chapter 7, \S B. Details
will be recalled in \S \ref{sss&l}. The splittable group $G$ is called
\emph{lattice-splittable} if its Fitting subgroup $N=\mathrm{Fit}(G)$ is a
lattice $\mathcal{T}$-group, that is, the subset $\log N$ in the Lie algebra
associated to $N$ is closed under addition (\emph{cf. }\cite{S1} Chapter 10, \S D).

The first result is a mild variation on material in \cite{S1} Chapter 10, \S E.

\begin{proposition}
\label{first step}There is an effective procedure that does the following:
given virtually polycyclic groups $G$ and $G^{\dag}$, it finds
lattice-splittable characteristic subgroups $G_{0}\leq G,~G_{0}^{\dag}\leq
G^{\dag}$ of finite index such that every isomorphism $\widehat{G}%
\rightarrow\widehat{G^{\dag}}$ sends $\overline{G_{0}}$ onto $\overline
{G_{0}^{\dag}}$.
\end{proposition}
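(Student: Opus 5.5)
The plan is to choose $G_{0}$ and $G_{0}^{\dag}$ by a recipe that depends only on invariants the profinite completion can see. Concretely, I will define $G_{0}=G^{m}$ (or an intersection of such verbal-type subgroups) for a suitable integer $m$, and $G_{0}^{\dag}=(G^{\dag})^{m}$ for the same $m$. The argument then splits into three parts: an invariance step, an arithmetic choice of $m$, and effective computation.

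First I handle invariance under profinite isomorphism. For a finitely generated residually finite group and any $m$, the closure $\overline{G^{m}}$ in $\widehat{G}$ equals $\widehat{G}^{m}$, the closed subgroup generated by $m$-th powers. This holds because $G^{m}$ has finite index in a virtually polycyclic group, and finite-index subgroups of polycyclic groups are closed in the profinite topology. Any topological isomorphism $\widehat{G}\to\widehat{G^{\dag}}$ maps $\widehat{G}^{m}$ onto $\widehat{G^{\dag}}^{m}$. By the remark in the introduction, every abstract isomorphism is continuous for virtually polycyclic groups, so the claim covers all isomorphisms. Hence it suffices to find a single $m$ that works simultaneously for $G$ and $G^{\dag}$. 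I can take the product of the two integers obtained separately, provided the property \textquotedblleft $G^{m}$ is lattice-splittable\textquotedblright\ persists when $m$ is replaced by a multiple.

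Second comes the group-theoretic core, following \cite{S1} Chapter 7, \S B and Chapter 10, \S D--E. There one shows that a virtually polycyclic group has a characteristic subgroup of finite index of the form $G^{m}$ that is splittable. One takes $G^{m}$ torsion-free, with $G^{m}/\mathrm{Fit}$ free abelian, acting on the Lie algebra of the Fitting subgroup by matrices with connected Zariski closure. One then passes to a further power so that $\mathrm{Fit}(G^{m'})$ is a lattice $\mathcal{T}$-group. I will argue that $\log$ of a sufficiently deep power subgroup of a $\mathcal{T}$-group is closed under addition: the Baker--Campbell--Hausdorff denominators are bounded in terms of the class, so $\log(N^{k})\subseteq k\cdot\log N$ plus correction terms, and for $k$ divisible by a large enough factorial the set becomes a lattice. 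I also need these properties to be inherited by $G^{mk}$ for every $k$, so that replacing $m$ by a common multiple is harmless. This inheritance is the main obstacle, because the Fitting subgroup of $G^{mk}$ must be $\mathrm{Fit}(G^{m})\cap G^{mk}$, and I would check splittability carefully at this point. My first approach is to show that for splittable $G$ the Fitting subgroup of any finite-index subgroup $H$ is $H\cap\mathrm{Fit}(G)$. This would follow from the connectedness, and hence unipotent-free semisimple part, of the action of a complement.

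Third is effectiveness. From a finite presentation of a virtually polycyclic group one can effectively compute a polycyclic presentation, torsion subgroups, the Fitting subgroup and the action matrices, by the algorithms of \cite{S3}. The bounds on $m$ (the exponent killing torsion, the exponent making the Zariski closure connected, and the factorial bound for the lattice condition) are all computable from this data. Generators for $G^{m}$ as a finite-index subgroup can then be found, for instance by enumerating the finite quotient $G/G^{m}$, which is computable for polycyclic groups. This gives $G_{0}$ and $G_{0}^{\dag}$ effectively, completing the proposal.
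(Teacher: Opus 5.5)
\textbf{There is a genuine gap.} Your reduction to a single exponent (take the product or lcm of the exponents found for $G$ and $G^{\dag}$) is fine, and so is the invariance argument via $\overline{G^{m}}=\widehat{G}^{m}$; the paper uses the same lcm device. The gap is in the group-theoretic core. You need $G^{m}$ to be lattice-splittable for every multiple $m$ of some $m_{0}$, and neither half of this is established.

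\emph{The lattice half.} Your BCH-denominator argument concerns power subgroups of a fixed $\mathcal{T}$-group. The relevant group, however, is $\mathrm{Fit}(G^{m})=G^{m}\cap\mathrm{Fit}(G)$ (for $G$ nice). This also contains the \emph{norm elements} $(ft)^{m}t^{-m}$, so it can be far larger than $\mathrm{Fit}(G)^{m}$. For example:
\begin{itemize}
\item Let $F=\mathrm{U}_{3}(\mathbb{Z})$, which is not a lattice group, and let $G=F\rtimes\langle t\rangle$ with $t$ inducing $S=\left(\begin{smallmatrix}2&1\\1&1\end{smallmatrix}\right)$ on $F/\mathrm{Z}(F)\cong\mathbb{Z}^{2}$.
\item For $m=2^{k}$, the images of the elements $(ft)^{m}t^{-m}$ form a sublattice of index $|\det(1+S+\cdots+S^{m-1})|=\mathrm{tr}(S^{m})-2$, which is odd. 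The elements $f^{m}$ contribute $m\mathbb{Z}^{2}$, of $2$-power index.
\item Hence $G^{m}\cap F$ maps onto $F/F'$, so it equals $F$. Thus $\mathrm{Fit}(G^{2^{k}})=F$ is not a lattice group for any $k$.
\end{itemize}
Class-$2$ BCH denominators only ever demand powers of $2$. What is actually needed here is a factor $3$, the order of $S$ modulo $2$. That factor is dictated by the action of $G/\mathrm{Fit}(G)$, not by the nilpotency class.

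\emph{The splitting half.} This is left open. For $G^{mk}$ you need the semisimple parts $c_{s}^{\ast}$ to preserve the new, smaller Fitting subgroup. Identifying $\mathrm{Fit}(H)$ with $H\cap\mathrm{Fit}(G)$ is automatic for normal $H$ and says nothing about this. A semisimple part of $c^{\ast}$ need not stabilize every $c^{\ast}$-invariant sublattice of finite index. Note also that splittability is precisely this integrality condition on the $c_{s}^{\ast}$, not connectedness of a Zariski closure.

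\textbf{How the paper avoids both problems.} It never lets the Fitting subgroup shrink.
\begin{itemize}
\item With $F=\mathrm{Fit}(G)$, it fixes $N=(F^{e})^{\alpha}$, a lattice $\mathcal{T}$-group, where $\alpha$ depends only on $n(F^{e})$.
\item It then sets $G_{1}=NG^{f}$ and $G_{0}=NG_{1}^{q}$ with $q=\mathrm{lcm}(h,h^{\dag})$. Here $h$ is the least exponent for which $NG_{1}^{h}=ND$ with $D$ maximal nilpotent and $D_{s}^{\ast}$ preserving $N$.
\item Because $N\leq G_{0}$, $G_{0}/N$ is free abelian and $F/N$ has finite exponent, we get $\mathrm{Fit}(G_{0})=N$ for every $q$. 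So the lattice property is automatic.
\item A maximal nilpotent $C\geq D^{q/h}$ in $G_{0}$ satisfies $C=(N\cap C)D^{q/h}$. Elements of $N$ induce unipotent automorphisms, so $C_{s}^{\ast}$ acts on $N$ through $D_{s}^{\ast}$ and still preserves $N$.
\item Invariance holds because $e$, $n(F^{e})$ and $f$ are $\mathcal{P}$-invariants. So every isomorphism maps $\overline{N}$ and $\overline{G_{1}}$ correctly, and $\overline{G_{0}}=\overline{N}\,\overline{G_{1}}^{q}$.
\end{itemize}

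What your proposal lacks is exactly this device of adjoining a fixed lattice Fitting subgroup to the power subgroup. The alternative would be a genuine proof, for instance a $p$-adic one, that $G^{m}\cap\mathrm{Fit}(G)$ is a lattice group and that $G^{m}$ remains splittable for all multiples $m$ of a suitable $m_{0}$.
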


Most of the work will go into proving

\begin{proposition}
\label{finiteexrtrabit}Let $G_{0}\leq G,~G_{0}^{\dag}\leq G^{\dag}$ be as in
Proposition \ref{first step}, and put $N=\mathrm{Fit}(G_{0}),~N^{\dag
}=\mathrm{Fit}(G_{0}^{\dag})$. Suppose that $\theta:\widehat{G}\rightarrow
\widehat{G^{\dag}}$ an isomorphism. Then there exists $v\in\overline{G_{0}}$
\ such that $\Theta:=v^{\ast}\circ\theta$ satisfies%
\begin{equation}
(\overline{N}G)\Theta=\overline{N^{\dag}}G^{\dag}. \label{luckything}%
\end{equation}

\end{proposition}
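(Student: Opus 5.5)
We need $v\in\overline{G_0}$ such that, with $v^*$ the inner automorphism by $v$, the composite $\Theta$ maps $\overline{N}G$ onto $\overline{N^\dag}G^\dag$. Since $\theta$ maps $\overline{G_0}$ onto $\overline{G_0^\dag}$ (Proposition \ref{first step}), it also maps $\overline{N}=\mathrm{Fit}(\overline{G_0})$ onto $\overline{N^\dag}$. Here we use that for lattice-splittable (indeed any virtually polycyclic) $G_0$ the closure of the Fitting subgroup is the maximal closed normal pronilpotent subgroup of $\overline{G_0}$ (\cite{S1} Chap.~10). So $\theta$ induces an isomorphism $\bar\theta:\widehat{G}/\overline{N}\to\widehat{G^\dag}/\overline{N^\dag}$. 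The plan is to reduce the problem to these quotients, which are virtually abelian, and then to an essentially linear problem about a maximal torus-like complement.

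\textbf{Step 1 (splitting).} Since $G_0$ is splittable, $G_0=N\rtimes C$ up to finite index, with $C$ free abelian acting semisimply on $N$ (i.e.\ on $\log N\otimes\mathbb{Q}$); likewise for $G_0^\dag=N^\dag\rtimes C^\dag$. Then $\overline{G_0}=\overline{N}\rtimes\overline{C}$, and $\overline{C}\cong\widehat{\mathbb{Z}}^r$ is a complement to $\overline N$ in $\overline{G_0}$ acting semisimply on each $\mathbb{Z}_p$-Lie lattice $\log\overline N_p$. The key point is a profinite conjugacy theorem for such complements: any two closed abelian complements $K_1,K_2$ to $\overline N$ in $\overline{G_0}$ whose action on $\overline N$ is semisimple (equivalently, images of tori) are conjugate by an element of $\overline N$. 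Prime by prime, this is the conjugacy of maximal reductive/torus parts in the $p$-adic analytic groups $\overline N_p\rtimes \overline C_p$, via vanishing of $H^1$ for a semisimple action without fixed points on successive quotients of the lower central series of $\overline{N}_p$, together with centralizer considerations.

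\textbf{Step 2 (apply to $\theta$).} The image $\overline{C}\theta$ is a closed complement to $\overline{N^\dag}$ in $\overline{G_0^\dag}$, and its action is semisimple, because semisimplicity is intrinsic: it is characterized by $\overline{C}$ acting semisimply on the abelianized graded pieces of the Fitting subgroup, which $\theta$ transports. By Step 1 there is $u\in\overline{N^\dag}$ with $\overline{C}\theta u^{*}=\overline{C^\dag}$. Pulling $u$ back, $u=v\theta$ for some $v\in\overline{N}\le\overline{G_0}$, and then $\Theta=v^*\circ\theta$ maps $\overline{N}C$ into $\overline{N^\dag}\,\overline{C}^\dag$.

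\textbf{Step 3 (discrete part).} We still need $\overline{N}C\cdot G$ to go to $\overline{N^\dag}G^\dag$, i.e.\ the discrete $C$ (not merely $\overline C$) should land in $\overline{N^\dag}C^\dag$, and the finitely many coset representatives of $G_0$ in $G$ should behave likewise. Choose $C$ and $C^\dag$ so that $G/N$ and $G^\dag/N^\dag$ are finitely generated virtually abelian with $\overline{G}/\overline{N}=\widehat{G/N}$. The isomorphism $\bar\theta$ of profinite completions of virtually abelian groups need not map $G/N$ to $G^\dag/N^\dag$. I would exploit the rigidity of the lattice-splittable setup: $C$ acts faithfully on $N$, so $G_0/N$ embeds in the automorphism group of the lattice $\log N$ via its semisimple part. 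Eigenvalue data (algebraic units) are preserved by $\Theta$ through the action on $\overline{N}$, so $\Theta$ sends the semisimple automorphism induced by $c\in C$ to one with the same characteristic polynomial. This forces $c\Theta$ into $\overline{N^\dag}\cdot(\text{something rational})$, and hence into $\overline{N^\dag}C^\dag$, because $\overline{C^\dag}\cap(\text{rational points})=C^\dag$.

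I expect this last step to be the main obstacle: passing from the profinite $\overline{C}$ to the discrete $C$. I would first try to realize $C$ and $C^\dag$ inside the $\mathbb{Q}$-points of the algebraic hull of $N$ (\cite{S1} Chap.~10, \S E), and use that $\Theta$ is induced on each $\overline N_p$ by an element of the $\mathbb{Q}_p$-points of the isomorphism of algebraic groups. Then I would argue that $\Theta$ is forced to match the $\mathbb{Z}$-form because $\overline{N}G$ is the normalizer-type closure determined by $N$. Finally I would extend from $G_0$ to $G$ using that $G=\bigcup g_iG_0$ and that $\overline{N}G$ is determined by its image in the finite quotient $G/G_0$.
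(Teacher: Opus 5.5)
Your Step 3, which you rightly call the crux, is not proved, and the eigenvalue heuristic cannot prove it. Take coordinates in which $T^{\dag}$ is diagonal over $\mathfrak{o}_k$. An element of $\overline{T^{\dag}}$ with the same characteristic polynomial as $t$ has, at each prime $\mathfrak{p}$ separately, the eigenvalues of $t$ on its diagonal in \emph{some} order $\sigma_{\mathfrak{p}}$. Nothing in your argument stops $\sigma_{\mathfrak{p}}$ from varying with $\mathfrak{p}$, and then the element is not in $\mathrm{D}_n(\mathfrak{o}_k)$, i.e.\ not ``rational''.

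The missing ingredient is the rigidity of tori, Lemma \ref{diag}: if $T,T^{\dag}\le\mathrm{GL}_n(\mathbb{Z})$ are diagonalizable and $\overline{T}^{x}=\overline{T^{\dag}}$ with $x\in\mathrm{GL}_n^{\infty}$, then $T^{x}=T^{\dag}$. The uniformity of the permutation comes from \cite{GS2}, Lemma 5.4, which rests on the main result of \cite{GS4}.

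It must also be applied to the right object. Not $C$, which is only nilpotent, need not act semisimply on $N$, and may meet $N$. The right object is the torus $T=\tau(C)$ of the semisimple splitting $MG=M\rtimes T$, embedded by $\beta_M$. First conjugate by some $y\in\overline{N}$ so that $\overline{C}^{y}\theta=\overline{C^{\dag}}$; here $C^{\dag}$ has to be chosen depending on $\theta$ (Lemma \ref{C-choices}). Your claim that any two semisimple closed complements are $\overline{N}$-conjugate is false in general, since $\mathcal{X}^{\ast}$ can contain several classes. After this conjugation, $\theta$ extends to $\widehat{MG}\to\widehat{M^{\dag}G^{\dag}}$ and is induced by some $x^{\ast}$ (Proposition \ref{iso-conj}). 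Lemma \ref{diag} then gives $T^{x}=T^{\dag}$, and $\overline{N}G_0=\overline{M}T\cap\overline{G_0}$ yields $(\overline{N}G_0)\theta=\overline{N^{\dag}}G_0^{\dag}$ for \emph{every} $\theta$ (Corollary \ref{TandTdag}).

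Your handling of $v$ and of the passage from $G_0$ to $G$ also fails. Since $v\in\overline{N}\le\overline{N}G$, conjugation by your $v$ leaves $\overline{N}G$ invariant and has no effect on the conclusion. You are in effect trying to prove $(\overline{N}G)\theta=\overline{N^{\dag}}G^{\dag}$ for every $\theta$, which is false once $G\neq G_0$. For instance, suppose $s\in G$ inverts $G_0/N=\langle Nt\rangle\cong\mathbb{Z}$, and let $g\theta=t^{-\lambda}gt^{\lambda}$ with $\lambda\in\widehat{\mathbb{Z}}\smallsetminus\mathbb{Z}$. Then $s\theta\in st^{2\lambda}\overline{N}$, so $s\theta\notin\overline{N}G$ because $2\lambda\notin\mathbb{Z}$. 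The same example shows that a subgroup containing $\overline{N}G_0$ is not determined by its image in $\widehat{G}/\overline{G_0}\cong G/G_0$.

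What is needed is a cohomological step. Put $L=(\overline{N^{\dag}}G^{\dag})\theta^{-1}$. The $G_0$ case gives $\overline{G_0}L=\widehat{G}$ and $\overline{G_0}\cap L=\overline{N}G_0$, which is normal in $\widehat{G}$. So $L/\overline{N}G_0$ and $\overline{N}G/\overline{N}G_0$ are both complements to $\overline{G_0}/\overline{N}G_0\cong(\widehat{\mathbb{Z}}/\mathbb{Z})^{e}$ in $\widehat{G}/\overline{N}G_0$, where $G_0/N\cong\mathbb{Z}^{e}$. This module is torsion-free and divisible, and $\widehat{G}/\overline{G_0}$ is finite, so $H^{1}$ vanishes. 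Hence the two complements are conjugate by an element of $\overline{G_0}$. That element, which in general cannot be taken in $\overline{N}$, is the $v$ of the statement.
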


This key result is established in \S \ref{mainproppf}. As we shall see, it is
a manifestation of the `rigidity of algebraic tori'. The point is that it
allows us to replace `exponential' Diophantine equations, involving the
entries of matrices like $\alpha^{x}$ ($\alpha$ a matrix over
$\widehat{\mathbb{Z}}$, $x$ an unknown from $\widehat{\mathbb{Z}}$), with
polynomial equations involving the entries of matrices like $\alpha^{z}$ with
$z\in\mathbb{Z}$; for each fixed choice of $z$ we obtain an ordinary
Diophantine equation with unknowns from $\widehat{\mathbb{Z}}$.

\bigskip

Keeping the above notation, set $\theta_{1}=\Theta|_{\overline{N}}$, so
$\theta_{1}:\overline{N}\rightarrow\overline{N^{\dag}}$ is an isomorphism (see
\S \ref{proftechs}). Let $\theta_{2}:G/N\rightarrow G^{\dag}/N^{\dag}$ be
induced from $\Theta$ via%
\[
G/N\cong\overline{N}G/\overline{N}\rightarrow\overline{N^{\dag}}G^{\dag
}/\overline{N^{\dag}}\cong G^{\dag}/N^{\dag}%
\]
(noting that $G\cap\overline{N}=N$ etc.). Let $\left\langle g_{1},\ldots
g_{d};~R\right\rangle $ be a finite presentation for $G,$ so $R$ is a finite
set of words $w$ such that $w(g_{1},\ldots,g_{d})=1$. Say%
\begin{equation}
Ng_{i}\theta_{2}=N^{\dag}h_{i}~(i=1,\ldots,d)\label{githeta2}%
\end{equation}
where each $h_{i}\in G^{\dag}$. For $w\in R$ we then have%
\[
b_{w}:=w(h_{1},\ldots,h_{d})\in N^{\dag}.
\]
There exist $v_{i}\in\overline{N^{\dag}}$ such that%
\[
g_{i}\Theta=v_{i}h_{i}~(i=1,\ldots,d).
\]
Now for any word $w,$%
\[
w(v_{1}h_{1},\ldots,v_{d}h_{d})=w_{\mathbf{h}}^{\prime}(v_{1},\ldots
,v_{d}).w(h_{1},\ldots,h_{d}),
\]
where the `derived word' $w_{\mathbf{h}}^{\prime}$ is a `twisted word', in
which each occurrence of $v_{i}$ in $w$ is replaced by a term of the form
$v_{i}^{x}$ where $x$ is a product of some of the $h_{j}^{\pm1}$ (\cite{S4}
\S 1.1). As $\Theta$ is a homomorphism we see that%
\begin{equation}
w\in R\Longrightarrow w_{\mathbf{h}}^{\prime}(v_{1},\ldots,v_{d}%
)b_{w}=1.\label{dagger-relations}%
\end{equation}

Let $\{a_{1},\ldots,a_{s}\}$ be a generating set for $N$. Then $a_{i}%
=u_{i}(\mathbf{g})$ for certain words $u_{i}$. Set $c_{i}=u_{i}(h_{1}%
,\ldots,h_{d})$, an element of $N^{\dag}$, so we have $a_{i}\Theta
=u_{i\mathbf{h}}^{\prime}(v_{1},\ldots,v_{d}).c_{i}$. Thus
\begin{equation}
a_{i}\theta_{1}=u_{i\mathbf{h}}^{\prime}(v_{1},\ldots,v_{d}).c_{i}%
~~~(i=1,\ldots,d). \label{theta1vsTheta}%
\end{equation}

Conversely, suppose we are given isomorphisms $\theta_{2}:G/N\rightarrow
G^{\dag}/N^{\dag},$ $\theta_{1}:\overline{N}\rightarrow\overline{N^{\dag}}$.
Choose the $h_{i}$ to satisfy (\ref{githeta2}). Now $\left\langle g_{1},\ldots
g_{d};~R\right\rangle $ is a presentation for $\widehat{G}$ in the category of
profinite groups; so if (\ref{dagger-relations}) holds then the mapping
$g_{i}\longmapsto v_{i}h_{i}~(i=1,\ldots,d)$ extends to a homomorphism
$\Theta:\widehat{G}\rightarrow\widehat{G^{\dag}}$, which induces $\theta
_{2}:G/N\rightarrow G^{\dag}/N^{\dag}$.

If in addition (\ref{theta1vsTheta}) holds, then $a_{i}\Theta=a_{i}\theta_{1}$
for each $i.$ As $\{a_{1},\ldots,a_{s}\}$ generates $\overline{N}$
topologically it follows that $\Theta$ restricts to $\theta_{1}$ on
$\overline{N}$. As $\theta_{2}$ and $\theta_{1}$ are both isomorphisms it then
follows that $\Theta:\widehat{G}\rightarrow\widehat{G^{\dag}}$ is both
surjective and injective.

\medskip We have established

\begin{proposition}
\label{mainequivalence}The following are equivalent:
\end{proposition}

\begin{description}
\item[(i)] $\widehat{G}\cong\widehat{G^{\dag}}$

\item[(ii)] There exist isomorphisms $\theta_{2}:G/N\rightarrow G^{\dag
}/N^{\dag},$ $\theta_{1}:\overline{N}\rightarrow\overline{N^{\dag}}$ and
elements $v_{1},\ldots,v_{d}\in\overline{N^{\dag}}$ such that given
(\ref{githeta2}), both (\ref{dagger-relations}) and (\ref{theta1vsTheta}) are satisfied.
\end{description}

Now isomorphisms between $\mathcal{T}$-groups, or their profinite completions,
correspond to isomorphisms between their associated Lie algebras. This means
that condition \textbf{(ii)} can be expressed in terms of linear algebra, and
doing this in an explicit manner gives

\begin{proposition}
\medskip\label{equns}There is an algorithm that does the following: given

\begin{itemize}
\item virtually polycyclic groups $G$,$~G^{\dag}$

\item $\mathcal{T}$-subgroups $N\vartriangleleft G$, $N^{\dag}\vartriangleleft
G^{\dag}$ such that every isomorphism $\widehat{G}\rightarrow\widehat{G^{\dag
}}$ sends $\overline{N}$ onto $\overline{N^{\dag}}$

\item an isomorphism $\theta:G/N\rightarrow G^{\dag}/N^{\dag}$
\end{itemize}

it constructs a Diophantine system $\mathcal{F(\theta})$ that is locally
solvable if and only if condition \textbf{(ii)} is satisfied with $\theta
_{2}=\theta$.
\end{proposition}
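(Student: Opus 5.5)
Since $\theta_2=\theta$ is fixed, the $h_i$ in (\ref{githeta2}) can be computed: find $h_i\in G^{\dag}$ with $N^{\dag}h_i=(Ng_i)\theta$. The elements $b_w=w(h_1,\ldots,h_d)$ for $w\in R$ and $c_i=u_i(h_1,\ldots,h_d)$ are then explicit elements of $N^{\dag}$. The remaining unknowns are an isomorphism $\theta_1:\overline{N}\to\overline{N^{\dag}}$ and elements $v_1,\ldots,v_d\in\overline{N^{\dag}}$. To make them polynomial unknowns I would use Mal'cev coordinates. Compute Mal'cev bases $a_1,\ldots,a_s$ of $N$ and $a_1^{\dag},\ldots,a_{s}^{\dag}$ of $N^{\dag}$; if the Hirsch lengths differ, output an unsolvable system such as $2x=1$. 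Each element of $\overline{N^{\dag}}$ is then uniquely $a_1^{\dag x_1}\cdots a_s^{\dag x_s}$ with $x_j\in\widehat{\mathbb{Z}}$, and multiplication, inversion and integer powers are given by polynomials with rational coefficients that are integer-valued. These polynomials are computable from the presentation of $N^{\dag}$ by the standard Hall--Mal'cev theory.

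Next comes the conjugation action of the $h_j$. Conjugation by a fixed $h\in G^{\dag}$ is an automorphism of $N^{\dag}$, and I would compute its effect on Mal'cev coordinates as an explicit integer-valued polynomial map. This is done by computing each $a_k^{\dag h}$ as a word in the basis and composing with the group-law polynomials. The twisted words $w'_{\mathbf h}$ involve only conjugates $v_i^{x}$ with $x$ an explicit product of the $h_j^{\pm1}$, so each equation in (\ref{dagger-relations}) becomes $s$ polynomial equations in the coordinates of $v_1,\ldots,v_d$. By continuity of the polynomial maps these equations are valid over $\widehat{\mathbb{Z}}$.

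The unknown $\theta_1$ I would encode by the images $a_i\theta_1=a_1^{\dag y_{i1}}\cdots a_s^{\dag y_{is}}$, so there are $s^2$ unknowns $y_{ik}$. The condition that these images define a homomorphism $\overline{N}\to\overline{N^{\dag}}$ is the requirement that they satisfy the finitely many defining relations of $N$ in its Mal'cev presentation, $a_j^{a_i}=$ a word in $a_{i+1},\ldots$, with exponents in $\mathbb{Z}$. This is polynomial in the $y$, because $\overline{N}$ is presented by the same relations in the category of pro-nilpotent groups, or more precisely because the Mal'cev presentation is a presentation of $\widehat{N}$ as a profinite group. For bijectivity I would add unknowns $y'_{ik}$ describing an inverse map together with the two composition identities, all polynomial. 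Equation (\ref{theta1vsTheta}) for each generator is likewise polynomial in the $y$ and the $v$-coordinates. The multiplication polynomials have denominators, which I would clear by multiplying through by a common integer $D$. This is harmless because an equation $Df=Dg$ over $\mathbb{Z}_p$ is equivalent to $f=g$, as $\mathbb{Z}_p$ is a domain. Finally $\widehat{\mathbb{Z}}=\prod_p\mathbb{Z}_p$, and all these structures decompose as products over $p$ (for example $\overline{N^{\dag}}=\prod_p N^{\dag}_{\hat p}$). Hence solvability over $\widehat{\mathbb{Z}}$ is equivalent to solvability over every $\mathbb{Z}_p$, which is local solvability of $\mathcal{F}(\theta)$.

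The main obstacle I expect is justifying rigorously that coordinate-wise solutions over each $\mathbb{Z}_p$ really give an isomorphism $\overline{N}\to\overline{N^{\dag}}$ together with elements $v_i$ satisfying (ii), and conversely. This needs two facts. First, a continuous homomorphism of pro-$p$ completions of a $\mathcal{T}$-group is determined by, and realised through, a polynomial map on $\mathbb{Z}_p$-Mal'cev coordinates. Second, the relations of a Mal'cev presentation suffice to present $N_{\hat p}$ over $\mathbb{Z}_p$-powers. If that route gets messy, I would instead pass to Lie algebras, as the text suggests. An isomorphism $\overline{N}\to\overline{N^{\dag}}$ then corresponds to a $\widehat{\mathbb{Z}}$-linear bijection between the lattices $\log\overline{N}$ and $\log\overline{N^{\dag}}$ (up to a fixed scaling) that preserves the bracket, and it is given by a matrix whose entries satisfy polynomial bracket identities and a determinant-unit condition $\det\cdot t=1$. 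The exp/log conversions are computable polynomial maps, so the equations (\ref{dagger-relations}) and (\ref{theta1vsTheta}) stay polynomial.
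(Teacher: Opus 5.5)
Your proposal is correct, but it takes a genuinely different route from the paper.

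\textbf{The paper's route.} The paper works entirely through the canonical embeddings $\beta=\beta_N$, $\beta'=\beta_{N^{\dag}}$ into unitriangular matrices. It encodes $\theta_1$ as conjugation by a single unknown matrix $H$ with $\det H$ a unit, so the homomorphism property and bijectivity of $\theta_1$ come for free. This has two costs.
\begin{itemize}
\item The direction from (ii) to local solvability needs the rigidity fact that every isomorphism $\widehat{N}\rightarrow\widehat{N^{\dag}}$ is induced by conjugation with some $\mathbf{y}\in\mathrm{GL}_n^{\infty}$.
\item The lattice hypothesis is used essentially, via $\overline{N\beta}=\exp(\widehat{\mathbb{Z}}\log N\beta)$. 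This turns `conjugation by $H$ carries $\overline{N\beta}$ onto $\overline{N^{\dag}\beta'}$' into the linear condition (\ref{e3}) with $Z$ invertible. It also turns `$v_i\in\overline{N^{\dag}}$' into (\ref{e4}), with $\Xi(i)$ tacitly unitriangular so that the polynomial $\log$ is the true logarithm.
\end{itemize}

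\textbf{Your route.} You parametrize $\overline{N^{\dag}}$ bijectively by $\widehat{\mathbb{Z}}^s$ via Mal'cev coordinates, so membership in $\overline{N^{\dag}}$ is automatic. The homomorphism property comes from the fact that a finite presentation of $N$ is a profinite presentation of $\widehat{N}\cong\overline{N}$. Bijectivity comes from an inverse map.

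One point needs to be made explicit: the $y'_{ik}$ must also be required to satisfy the relations of $N^{\dag}$. You evaluate $\theta_1'$ on $a_i\theta_1=\prod_k a_k^{\dag y_{ik}}$ as $\prod_k (a_k^{\dag}\theta_1')^{y_{ik}}$. That step presupposes $\theta_1'$ is a continuous homomorphism, and it uses the power polynomials with exponents in $\widehat{\mathbb{Z}}$.

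\textbf{Comparison.} Your route costs the Hall--Mal'cev polynomial machinery and a larger system of higher degree, which is immaterial for Theorem \ref{ax}. In return it needs neither the canonical-embedding rigidity nor the lattice property. It therefore proves the proposition for arbitrary $\mathcal{T}$-subgroups exactly as stated. The paper's argument, by contrast, relies on $N$ and $N^{\dag}$ being lattice groups, as arranged by Proposition \ref{first step}.
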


For details of the proof see \S \ref{dio}.

\bigskip

\emph{Proof of Theorem \ref{profiso}}

\medskip

Given $G$ and $G^{\dag}$ we have to decide whether or not $\widehat{G}%
\cong\widehat{G^{\dag}}$. Two procedures are run simultaneously:

\bigskip

\emph{Procedure 1}: Enumerate the finite quotients $G/G^{m},~G^{\dag}/G^{\dag
m}$ (cf. \cite{BCRS}, Proposition 2.10). For each $m$ decide whether or not
$G/G^{m}\cong G^{\dag}/G^{\dag m}$. When the answer is `no', stop.

\medskip

\emph{Procedure 2:} Enumerate isomorphisms $\theta:G/N\rightarrow G^{\dag
}/N^{\dag}$ (cf. \cite{BCRS}, Proposition 2.2, Lemma 3.8). For each one,
construct the system $\mathcal{F(\theta})$ by Proposition \ref{equns} and
decide (by Theorem \ref{ax}) whether the Diophantine system $\mathcal{F(\theta
})$ is locally solvable. When the answer is `yes', stop.

\medskip

The first procedure stops if and only if $\widehat{G}\ncong\widehat{G^{\dag}%
};$ the second stops if and only if $\widehat{G}\cong\widehat{G^{\dag}}$. Thus
the profinite isomorphism of $G$ and $G^{\dag}$ is decidable.

\bigskip

\emph{Some Notation}

\bigskip

$\overline{X}\qquad\ \ $closure of subset $X\ $in a topological group$\ $

$A\leq_{f}G\qquad A$ is a subgroup of finite index in $G$

$x^{\ast}\qquad$inner automorphism induced by $x$

$\mathrm{U}_{n}\qquad$Group of upper unitriangular $n\times n$ matrices

$\mathfrak{u}_{n}\qquad$Lie algebra of upper zero-triangular $n\times n$ matrices

$\mathrm{D}_{n}\qquad$Group of diagonal $n\times n$ matrices

$\mathrm{Sym}(n)\qquad$Symmetric group of degree $n$, also group of $n\times
n$ permutation matrices

$\mathrm{Hol}(G)=G\rtimes\mathrm{Aut}(G)\qquad$Holomorph of the group $G$

$\mathrm{Fit}(G)\qquad$Fitting subgroup of $G$ (maximal nilpotent normal subgroup)

$\mathrm{GL}_{n}^{\infty}=\mathrm{GL}_{n}(\widehat{\mathbb{Z}})=\prod
\nolimits_{p\text{ prime}}\mathrm{GL}_{n}(\mathbb{Z}_{p})$

$\mathfrak{o}_{k}\qquad$Ring of integers of number field $k$

$\pi_{\mathfrak{p}}:\mathrm{GL}_{n}(\mathfrak{o}_{k})\rightarrow
\mathrm{GL}_{n}(\mathfrak{o}_{k}/\mathfrak{p})\qquad$natural homomorphism
reduction modulo $\mathfrak{p}$, an ideal in $\mathfrak{o}_{k}$

\section{Technicalities\label{proftechs}}

\subsection{Profinite completions}

The passage from a virtually polycyclic group $G$ to its profinite completion
$\widehat{G}$ is very well behaved. Write $\overline{X}$ to denote the closure
in $\widehat{G}$ of a subset $X$. For a subgroup $A$ and a normal subgroup $N$
of $G$ denote by $\pi:\widehat{G}\rightarrow\widehat{G/N}$ and $i:\widehat{A}%
\rightarrow\widehat{G}$ the morphisms induced by the quotient map
$G\rightarrow G/N$ and the inclusion $A\hookrightarrow G$. Let $B\leq G$. Then%
\begin{align}
\widehat{G}/\overline{N}  &  \cong\widehat{G/N}\text{ induced by }%
\pi\nonumber\\
\widehat{A}  &  \cong\overline{A}\text{ induced by }i\nonumber\\
\overline{A}\cap G  &  =A\nonumber\\
\overline{A}\cap\overline{B}  &  =\overline{A\cap B},~~\mathrm{C}%
_{\overline{B}}(\overline{A})=\overline{\mathrm{C}_{B}(A)},~~\mathrm{N}%
_{\overline{B}}(\overline{A})=\overline{\mathrm{N}_{B}(A)}~\label{subgps}\\
\mathrm{Fit}(\widehat{G})  &  =\overline{\mathrm{Fit}(G)}\label{Fit}\\
\widehat{G^{m}}  &  =\widehat{G}^{m}~~(m\in\mathbb{N})\label{powers}\\
\overline{A}\overset{\widehat{G}}{\sim}\overline{B}  &  \Longleftrightarrow
A\overset{G}{\sim}B \label{subconj}%
\end{align}
where $\overset{G}{\sim}$ means `is conjugate in $G$ to'. $\mathrm{Fit}(G)$
denotes the Fitting subgroup of $G$.

The first claim is a general fact. For the second claim see \cite{S1} Chap.
10, \S B, and for the third \emph{loc. cit.} Chap 1, Ex. 11. (\ref{subgps}) is
established in \cite{RSZ}, Props. 3.3, 3.5. For (\ref{Fit}) (Pickel's theorem)
see \cite{S1} Chap. 10, \S C; (\ref{powers}) is \cite{S1} Chap. 10, Lemma 3,
and (\ref{subconj}) is \cite{S1} Chap. 4, Theorem 7.

Recall that%
\[
\mathrm{GL}_{n}(\mathbb{Z)}<\mathrm{GL}_{n}^{\infty}=\mathrm{GL}%
_{n}(\widehat{\mathbb{Z}})=\prod\limits_{p}\mathrm{GL}_{n}(\mathbb{Z}_{p}).
\]
$\mathrm{GL}_{n}^{\infty}$ is endowed with the congruence topology. If
$G<\mathrm{GL}_{n}(\mathbb{Z)}$ is virtually polycyclic, the embedding
$G\rightarrow\overline{G}$ induces an isomorphism $\widehat{G}\rightarrow
\overline{G}$ (\cite{S1} Chap. 10, Prop. 5). This is a consequence of
\cite{S1} Chap. 4, Thm. 5, which says that the profinite topology on $G$ is
induced by the congruence topology on $\mathrm{GL}_{n}(\mathbb{Z)}$, and that
$G$ is closed in the latter topology (Formanek's Theorem). As group
homomorphisms are continuous in the profinite topology, it follows that any
homomorphism $G\rightarrow H$, for polycyclic groups $G,~H\leq\mathrm{GL}%
_{n}(\mathbb{Z)}$, extends uniquely to a continuous homomorphism $\overline
{G}\rightarrow\overline{H}$.

We shall often use these facts without special mention. Many more or less
elementary algorithms for working with virtually polycyclic groups are
described in \cite{BCRS}; these will also sometimes be used implicitly.

\subsection{$\mathcal{T}$-groups\bigskip}

Let $N$ be a $\mathcal{T}$-group. The `canonical embedding' $\beta_{N}$ of a
~$N$, described in Chapter 5 of \cite{S1}, identifies $N$ with a subgroup of
$\mathrm{U}_{n}(\mathbb{Z})$ in such a way that every automorphism of $N$ is
induced by conjugation with a matrix in $\mathrm{GL}_{n}(\mathbb{Z})$,
$n=n(N)$. Specifically, $\beta_{N}$ gives the action of $N$ on a module%
\[
V(N)=\mathbb{Z}N/I_{N}\cong\mathbb{Z}^{n}%
\]
for a certain characteristic ideal $I_{N}$ of the group ring $\mathbb{Z}N$,
where $N$ acts by right multiplication on $\mathbb{Z}N$. Here $n=n(N)$ is an
invariant of $N,$ in principle easily determined from the structure of $N$
(\cite{S1} Chapter 5 \S B). We often identify $N$ with $N\beta_{N}$.

We extend $\beta_{N}$ to $\mathrm{Hol}(N)$ by making automorphisms of $N$ act
in the natural way on $V(N)$.

\begin{lemma}
\label{faithful}\bigskip Let $\Gamma=\mathrm{Aut}(N)\beta_{N}$ and write
$\widetilde{\Gamma}$ for the Zariski closure of $\Gamma$ in $\mathrm{GL}_{n}$.
Then $\mathrm{C}_{\widetilde{\Gamma}}(N\beta_{N})=1$.
\end{lemma}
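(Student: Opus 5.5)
The plan is to exploit the explicit form of $V(N)=\mathbb{Z}N/I_N$. The key structural fact is the following. The element $\bar 1$, the image of $1\in\mathbb{Z}N$, is a cyclic generator of $V(N)$ as an $N$-module, since $\bar 1\cdot x=\bar x$ for $x\in N$. Moreover, every automorphism $\alpha\in\mathrm{Aut}(N)$ acts on $V(N)$ via its natural action on $\mathbb{Z}N$, and $I_N$ is characteristic, so $\bar 1\alpha\beta_N=\bar 1$.

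Hence every element of $\Gamma$ fixes the vector $e:=\bar 1\in V(N)\otimes\mathbb{Q}$. Fixing a given vector is a Zariski-closed condition, so every element of $\widetilde{\Gamma}$ also fixes $e$. That is, $\widetilde\Gamma$ is contained in the stabilizer of $e$ in $\mathrm{GL}_n$.

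Now take $c\in \mathrm{C}_{\widetilde{\Gamma}}(N\beta_N)$, with entries in an algebraically closed field $K\supseteq\mathbb{Q}$. For each $x\in N$ we have
\[
(e\,x\beta_N)c=(ec)(x\beta_N)=e\,x\beta_N .
\]
So $c$ fixes every vector $e\,x\beta_N=\bar x$. These vectors span $V(N)$ over $\mathbb{Z}$, since $V(N)$ is a quotient of $\mathbb{Z}N$. Therefore they span $K^n$, so $c=1$.

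The main point to verify carefully is the claim that the extension of $\beta_N$ to $\mathrm{Hol}(N)$ really is the action on $\mathbb{Z}N/I_N$ induced by $\alpha$ acting on group-ring elements. In particular, no twisting by a cocycle may enter, since only then is $\bar 1$ fixed by all of $\Gamma$. This should follow from the definition in \cite{S1} Chapter 5 and from $I_N$ being characteristic, which ensures that $\alpha$ acts on the quotient. One must also confirm that the actions are compatible, i.e.\ that $(\bar x)\alpha=\overline{x\alpha}$ and $\alpha^{-1}x\alpha=x\alpha$ inside $\mathrm{Hol}(N)$. This compatibility is what ensures $\beta_N$ is a homomorphism on the holomorph, but it is not needed beyond the fixed-vector observation.

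Once the fixed vector is in hand, the rest is the Zariski-closure argument above. No structure theory of $\widetilde\Gamma$ (tori, unipotent radical, and so on) is needed.
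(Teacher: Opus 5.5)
Your proposal is correct and is the paper's proof. $\Gamma$ fixes $1+I_N$, so $\widetilde{\Gamma}$ lies in the stabilizer of that vector, and since the $N$-translates of $1+I_N$ span $V(N)$, any element of $\widetilde{\Gamma}$ centralizing $N\beta_N$ is trivial; the compatibility you flag is part of how the paper defines the extension of $\beta_N$ to $\mathrm{Hol}(N)$ (automorphisms act naturally on $\mathbb{Z}N/I_N$).
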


\begin{proof}
The action of $\mathrm{Aut}(N)$ on $V(N)$ fixes the element $1^{\sharp
}=1+I_{N},$ so $\widetilde{\Gamma}$ is contained in the stabilizer of
$1^{\sharp}$. The result follows since the $N$-translates of $1^{\sharp}$ span
$V(N)$.
\end{proof}

\bigskip

There are mutually inverse polynomial mapppings%
\begin{align*}
\log &  :\mathrm{U}_{n}(\mathbb{Q})\rightarrow\mathfrak{u}_{n}(\mathbb{Q})\\
\exp &  :\mathfrak{u}_{n}(\mathbb{Q})\rightarrow\mathrm{U}_{n}(\mathbb{Q}),
\end{align*}
and%
\[
\mathcal{L}_{\mathbb{Q}}(N):=\mathbb{Q}\log N
\]
is a Lie subalgebra of $\mathfrak{u}_{n}(\mathbb{Q})$. The Mal'cev completion
$N^{\mathbb{Q}}$ of $N$ may be identified with $\exp\mathcal{L}_{\mathbb{Q}%
}(N)$. Of course the Lie algebra and the mappings $\log$ and $\exp$ may be
defined intrinsically, using the Campbell-Hausdorff formula: the embedding
$\beta$ is merely a convenience.

The subset $\log N\subseteq\mathcal{L}_{\mathbb{Q}}(N)$ is not necessarily
closed under addition; if it happens to be so, $N$ is said to be a
\emph{lattice group}, and then $\log N$ is indeed a full lattice in
$\mathcal{L}_{\mathbb{Q}}(N)$.

An automorphism $\alpha$ of $N$ induces a Lie algebra automorphism, also
denoted $\alpha$, of $\mathcal{L}_{\mathbb{Q}}(N)$. This has a multiplicative
Jordan decomposition%
\[
\alpha=\alpha_{u}\cdot\alpha_{s}=\alpha_{s}\cdot\alpha_{u}%
\]
where $\alpha_{u}$ is unipotent and $\alpha_{s}$ is semisimple (see e.g.
\cite{S1} Chapter 7). Then $\alpha_{u}$ and $\alpha_{s}$ correspond to
automorphisms (with the same names) of $N^{\mathbb{Q}}$; these may or may not
fix $N.$ The automorphism $\alpha$ of $N$ is said to be semisimple if
$\alpha=\alpha_{s}$.

\begin{lemma}
\label{Jordanfor beta}\bigskip Let $\alpha$ be an automorphism of $N$. Then%
\[
(\alpha\beta_{N})_{u}=\alpha_{u}\beta_{N},~(\alpha\beta_{N})_{s}=\alpha
_{s}\beta_{N}.
\]

\end{lemma}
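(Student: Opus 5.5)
The plan is to compare two linear maps on $\mathcal{L}_{\mathbb{Q}}(N)$, or equivalently two matrices in $\mathrm{GL}_{n}(\mathbb{Q})$. Put $A=\alpha\beta_{N}$. This is the matrix by which $\alpha$ acts on $V(N)\otimes\mathbb{Q}=\mathbb{Q}N/I_{N}\otimes\mathbb{Q}$. First I would make sense of $\alpha_{u}\beta_{N}$ and $\alpha_{s}\beta_{N}$ when $\alpha_{u},\alpha_{s}$ do not preserve $N$. The ideal $I_{N}$ is characteristic, and $\beta_{N}$ is built functorially from $N$. So the construction extends to the Mal'cev completion: $V(N)\otimes\mathbb{Q}$ carries a natural action of $\mathrm{Aut}(N^{\mathbb{Q}})$ compatible with the action of $N^{\mathbb{Q}}$, and we get a rational homomorphism $\beta:\mathrm{Aut}(N^{\mathbb{Q}})\rightarrow\mathrm{GL}_{n}(\mathbb{Q})$ extending $\beta_{N}$ on $\mathrm{Aut}(N)$. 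Concretely, $\mathrm{Aut}(N^{\mathbb{Q}})\cong\mathrm{Aut}(\mathcal{L}_{\mathbb{Q}}(N))$ is a $\mathbb{Q}$-algebraic group, and $V(N)\otimes\mathbb{Q}\cong\mathbb{Q}N/I_{N}\mathbb{Q}N$. I would check, using the description in \cite{S1} Chapter 5, that this quotient is determined by $N^{\mathbb{Q}}$; it is a truncated quotient of the completed group algebra by a power of the augmentation ideal. Then I would check that the induced action is by polynomial (algebraic) maps.

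Given this, the result follows from the basic fact that morphisms of linear algebraic groups preserve Jordan decomposition. The group $\mathrm{Aut}(\mathcal{L}_{\mathbb{Q}}(N))$ is Zariski closed in $\mathrm{GL}(\mathcal{L}_{\mathbb{Q}}(N))$, so the Jordan components $\alpha_{u},\alpha_{s}$ of $\alpha$ lie in it. Since $\beta$ is a morphism of algebraic groups,
\[
(\alpha\beta)_{u}=\alpha_{u}\beta,\qquad(\alpha\beta)_{s}=\alpha_{s}\beta .
\]
Restricted to $\alpha\in\mathrm{Aut}(N)$, this is exactly the claim.

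The main obstacle is the first step: showing that the action of $\mathrm{Aut}(N)$ on $V(N)$ really is the restriction of an algebraic representation of $\mathrm{Aut}(\mathcal{L}_{\mathbb{Q}}(N))$. If that is awkward, I would argue more directly as follows. Here $\alpha\mapsto\alpha\beta_{N}$ is already a group homomorphism on $\mathrm{Aut}(N)$; the work is to identify it with the algebraic one. Let $\mathfrak{a}$ be the augmentation ideal of $\mathbb{Q}N$. Then $\mathbb{Q}N/\mathfrak{a}^{k}$ is a finite-dimensional quotient on which $\mathrm{Aut}(N)$ acts. Via $\log$ and $\exp$ it is a quotient of the universal enveloping algebra of $\mathcal{L}_{\mathbb{Q}}(N)$, on which the action of $\mathrm{Aut}(\mathcal{L}_{\mathbb{Q}}(N))$ is visibly polynomial. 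By \cite{S1} Chapter 5, $V(N)\otimes\mathbb{Q}$ is such a quotient. The induced representation $\mathrm{Aut}(\mathcal{L}_{\mathbb{Q}}(N))\rightarrow\mathrm{GL}(V(N)\otimes\mathbb{Q})$ is therefore a morphism of algebraic groups, and the preservation of Jordan decomposition applies.
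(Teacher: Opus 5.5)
Your proof is correct, but it takes a different route from the paper's.

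\textbf{How the two proofs differ.} The paper never extends $\beta_N$ to an algebraic representation of $\mathrm{Aut}(\mathcal{L}_{\mathbb{Q}}(N))$. Instead it works inside the Zariski closure $\widetilde{\Gamma}$ of $\mathrm{Aut}(N)\beta_N$, which contains $(\alpha\beta)_s$ and $(\alpha\beta)_u$. The conjugation action of $\widetilde{\Gamma}$ on $L=\mathbb{Q}\log(N\beta)$ is rational, so these two matrices act on $L$ semisimply and unipotently respectively. Hence $(\alpha_s\beta)^{-1}(\alpha\beta)_s=(\alpha_u\beta)(\alpha\beta)_u^{-1}$ acts trivially on $L$, so it centralizes $N\beta$. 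Lemma \ref{faithful} then forces it to be $1$; its proof shows that an element fixing $1^{\sharp}$ and centralizing $N\beta$ is trivial.

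So the paper needs only the obvious rationality of conjugation on $L$, and the rigidity comes from Lemma \ref{faithful}. You replace that lemma with a structural input: $V(N)\otimes\mathbb{Q}$ is a functorial quotient of $\mathbb{Q}N/\mathfrak{a}^k\cong U(\mathcal{L}_{\mathbb{Q}}(N))/\mathfrak{u}^k$, a Jennings--Quillen type identification. This costs more, but it reduces everything to the standard fact that morphisms of algebraic groups preserve Jordan decomposition. It also gives $\alpha_s\beta_N$ and $\alpha_u\beta_N$ a precise meaning when $\alpha_s,\alpha_u$ do not preserve $N$. The paper's proof uses such an extension (a homomorphism compatible with conjugation on $L$ and fixing $1^{\sharp}$) without constructing it.

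\textbf{One point to tighten in your fallback.} Being a quotient of $U/\mathfrak{u}^k$ does not by itself let all of $\mathrm{Aut}(\mathcal{L}_{\mathbb{Q}}(N))$ act, because the kernel must be invariant. It is visibly only $\mathrm{Aut}(N)$-invariant, and $\mathrm{Aut}(N)$ need not be Zariski dense in $\mathrm{Aut}(\mathcal{L}_{\mathbb{Q}}(N))$. You have two ways to fix this.
\begin{itemize}
\item Prove that $x\mapsto\exp(\log x)$ gives an isomorphism $\mathbb{Q}N/\mathfrak{a}^k\rightarrow U/\mathfrak{u}^k$, and that $I_N\otimes\mathbb{Q}$ is exactly the corresponding power of the augmentation ideal.
\item Alternatively, argue more cheaply. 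Since $N\beta_N\le\mathrm{U}_n$, every product $(x_1-1)\cdots(x_n-1)$ kills $1^{\sharp}$, so $\mathfrak{a}^n\subseteq I_N$. Hence $V(N)\otimes\mathbb{Q}$ is an $\mathrm{Aut}(N)$-equivariant quotient of $U/\mathfrak{u}^n$. The kernel is $\alpha$-stable, so it is stable under the Zariski closure of $\langle\alpha\rangle$ in $\mathrm{Aut}(\mathcal{L}_{\mathbb{Q}}(N))$. That closure contains $\alpha_s$ and $\alpha_u$, and it is all your Jordan-decomposition argument needs.
\end{itemize}
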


\begin{proof}
Write $\beta=\beta_{N}$ and $L=\mathbb{Q}\log(N\beta)\subseteq\mathfrak{u}%
_{n}(\mathbb{Q})$. Let $\widetilde{\Gamma}$ be as in Lemma \ref{faithful};
this is contained in the stabilizer of $L$ in the conjugation action of
$\mathrm{GL}_{n}(\mathbb{Q})$. Now $\alpha\beta\in\widetilde{\Gamma}$ so also
$(\alpha\beta)_{s},~(\alpha\beta)_{u}\in\widetilde{\Gamma}$ (the Jordan
components of $\alpha\beta$ as an element of $\mathrm{GL}_{n}(\mathbb{Q})$ ).
As the conjugation action on $L$ is rational, it follows that $(\alpha
\beta)_{s}$ and $(\alpha\beta)_{u}$ act on $L$ as they should, i.e.
semisimply, unipotently respectively (\cite{W} Chap. 7, and Theorem 14.20). On
the other hand, $\alpha_{s}\beta$ and $\alpha_{u}\beta$ act semisimply,
unipotently respectively on $L$ by definition. The four matrices all commute
and%
\[
(\alpha\beta)_{s}(\alpha\beta)_{u}=\alpha\beta=(\alpha_{s}\beta)(\alpha
_{u}\beta),
\]
so $(\alpha_{s}\beta)^{-1}(\alpha\beta)_{s}=(\alpha_{u}\beta)(\alpha\beta
)_{u}^{-1}$ acts both semisimply and unipotently, hence trivially, on $L$. The
result now follows by Lemma \ref{faithful}.
\end{proof}

\bigskip

The same considerations apply with $\mathbb{Q}_{p}$ in place of $\mathbb{Q}$.
For a lattice group $N$, the pro-$p$ completion $\widehat{N}_{p}$ may be
identified with $\exp(\mathbb{Z}_{p}\log N)\subseteq\exp\mathcal{L}%
_{\mathbb{Q}_{p}}(N)$. The embedding $\beta$ extends to an isomorphism from
$\widehat{N}_{p}$ to the closure of $N\beta$ in $\mathrm{U}_{n}(\mathbb{Z}%
_{p})$, and then to an isomorphism
\[
\beta:\widehat{N}=\prod\limits_{p}\widehat{N}_{p}\overset{\cong}{\rightarrow
}\overline{N\beta}\leq\mathrm{U}_{n}(\widehat{\mathbb{Z}})<\mathrm{GL}%
_{n}^{\infty}\text{.}%
\]

An automorphism $\alpha$ of $\widehat{N}$ fixes each $\widehat{N}_{p}$, and
has a multiplicative Jordan decomposition defined by its action on the
indvidual factors. Moreover, the action of $\alpha$ on $\overline{N\beta}$ is
induced by conjugation with an element of $\mathrm{GL}_{n}^{\infty}$.

More generally, if $N^{\dag}$ is another $\mathcal{T}$-group and
$\theta:\widehat{N}\rightarrow\widehat{N^{\dag}}$ is an isomorphism, then
$n(N)=n(N^{\dag})$ and there exists $y\in\mathrm{GL}_{n}^{\infty}$ making the
following diagram commute:%
\[%
\begin{array}
[c]{ccc}%
\widehat{N} & \overset{\beta_{N}}{\longrightarrow} & \overline{N\beta_{N}}\\
\theta\downarrow &  & \downarrow y^{\ast}\\
\widehat{N^{\dag}} & \overset{\beta_{N^{\dag}}}{\longrightarrow} &
\overline{N^{\dag}\beta_{N^{\dag}}}%
\end{array}
;
\]
here $y$ represents the isomorphism $V(N)\otimes\widehat{\mathbb{Z}%
}\rightarrow V(N^{\dag})\otimes\widehat{\mathbb{Z}}$ induced by $\theta$ (see
\cite{S1},\ Chapter 10, Lemma 1).

\begin{lemma}
\label{Tgp-Pinv}Let $N$ be a virtually polycyclic group.

\emph{(i)} $N$ is a $\mathcal{T}$-group if and only if $\widehat{N}$ is
torsion-free and nilpotent.

\emph{(ii) }If $N$ is a $\mathcal{T}$-group, then $N$ is a lattice group if
and only if $\log\widehat{N}$ is closed under addition.
\end{lemma}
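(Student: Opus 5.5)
For (i), the forward direction is the core of the argument. Suppose $N$ is a $\mathcal{T}$-group. Then the canonical embedding identifies $\widehat{N}$ with $\overline{N\beta_{N}}\leq\mathrm{U}_{n}(\widehat{\mathbb{Z}})$. It is therefore nilpotent of class at most $n-1$, being a subgroup of a unitriangular group. It is also torsion-free, because $\mathrm{U}_{n}(\mathbb{Z}_{p})$ is torsion-free for every $p$: a unipotent matrix of finite order over a field of characteristic $0$ is the identity, since its logarithm is a nilpotent matrix $X$ with $mX=0$.

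Now suppose conversely that $\widehat{N}$ is torsion-free and nilpotent, with $N$ virtually polycyclic. Then $N$ is residually finite, so $N$ embeds in $\widehat{N}$. Hence $N$ is torsion-free, and it is nilpotent because the lower central series of $N$ embeds in that of $\widehat{N}$. Finally $N$ is finitely generated, so $N$ is a $\mathcal{T}$-group. Alternatively, nilpotence follows from (\ref{Fit}): $\mathrm{Fit}(\widehat{N})=\widehat{N}=\overline{\mathrm{Fit}(N)}$, and intersecting with $N$ gives $N=\mathrm{Fit}(N)$.

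For (ii), I would work prime by prime. Use $\widehat{N}=\prod_{p}\widehat{N}_{p}$ and $\log\widehat{N}=\prod_{p}\log\widehat{N}_{p}$, where $\log$ is computed inside $\mathfrak{u}_{n}(\widehat{\mathbb{Z}})$ via $\beta$, or intrinsically via the Campbell–Hausdorff formula.

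\emph{Lattice $\Rightarrow$ closed.} If $N$ is a lattice group, then $\widehat{N}_{p}=\exp(\mathbb{Z}_{p}\log N)$, as recalled above. So $\log\widehat{N}_{p}=\mathbb{Z}_{p}\log N$, which is an additive group, and hence $\log\widehat{N}$ is closed under addition.

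\emph{Closed $\Rightarrow$ lattice.} Suppose $\log\widehat{N}$ is closed under addition, and take $x,y\in N$. Then $\log x+\log y\in\log\widehat{N}$, so $z:=\exp(\log x+\log y)\in\widehat{N}$. On the other hand $z\in N^{\mathbb{Q}}$, the rational Mal'cev completion, embedded diagonally. The key claim is
\[
N^{\mathbb{Q}}\cap\widehat{N}=N.
\]
To see this, note that $z^{m}\in N$ for some $m\geq1$, because $N$ has finite index in every finitely generated subgroup of $N^{\mathbb{Q}}$. In matrix terms, $N\beta\leq\mathrm{U}_{n}(\mathbb{Z})$, and $z$ is a rational unipotent matrix that lies in $\mathrm{U}_{n}(\mathbb{Z}_{p})$ for every $p$. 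Therefore $z\in\mathrm{U}_{n}(\mathbb{Z})$, and $M:=\langle N,z\rangle$ is a $\mathcal{T}$-subgroup of $\mathrm{U}_{n}(\mathbb{Z})$ containing $N$ with finite index. By the congruence topology facts in \S\ref{proftechs}, $\overline{N}\cap M=N$. Since $z\in\overline{N}$ (closure taken in $\mathrm{GL}_{n}^{\infty}$), we get $z\in N$. Thus $\log x+\log y=\log z\in\log N$, so $N$ is a lattice group.

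I expect the main obstacle to be this intersection step $N^{\mathbb{Q}}\cap\widehat{N}=N$, that is, making sure the closure of $N$ meets the finitely generated overgroup $M$ exactly in $N$. The cited fact $\overline{A}\cap G=A$, applied with $G=M$ and $A=N$ together with $\widehat{M}\cong\overline{M}$, is exactly what handles it.
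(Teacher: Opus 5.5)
Your proof is correct. For (i) it is the `simple application of the profinite technicalities' that the paper indicates: the embedding $\widehat{N}\cong\overline{N\beta_N}\le\mathrm{U}_n(\widehat{\mathbb{Z}})$ gives the forward direction, and residual finiteness (or $\mathrm{Fit}(\widehat N)=\overline{\mathrm{Fit}(N)}$ intersected with $N$) gives the converse.

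For (ii) the paper gives no proof at all; it is left as an exercise and never used. Your argument is a valid solution built from facts the paper does recall. For lattice $\Rightarrow$ closed you use the description $\widehat{N}_p=\exp(\mathbb{Z}_p\log N)$ of the pro-$p$ completions of a lattice group. For closed $\Rightarrow$ lattice you use the congruence-closedness of $N\beta$, which gives $N^{\mathbb{Q}}\cap\widehat{N}=N$.

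The detour through $M=\langle N,z\rangle$ is harmless but unnecessary. Once you have $z\in\mathrm{U}_n(\mathbb{Z})\cap\overline{N\beta}$, you can conclude $z\in N\beta$ directly. Either apply Formanek's theorem, that $N\beta$ is closed in the congruence topology on $\mathrm{GL}_n(\mathbb{Z})$, or apply $\overline{A}\cap G=A$ with $G=\mathrm{U}_n(\mathbb{Z})$, whose closure in $\mathrm{GL}_n^{\infty}$ is $\mathrm{U}_n(\widehat{\mathbb{Z}})$. So the remark that $z^m\in N$ and the finiteness of $|M:N|$ play no role in the argument.
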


Here $\log\widehat{N}$ is identified with $\log\overline{N\beta}%
\subseteq\mathfrak{u}_{n}(\widehat{\mathbb{Z}})$ where $\beta=\beta_{N}$. Part
(i) is a simple application of the `Profinite technicalities'. Part (ii),
which will not actually be used, is left as an exercise.\medskip

If $N$ is a normal subgroup of a group $G,$ we can embed $G$ canonically into
a bigger group $N^{\mathbb{Q}}G,$ so that $N^{\mathbb{Q}}\cap G=N$ and the
following diagram of inclusions commutes:%
\[%
\begin{array}
[c]{ccc}%
N & \longrightarrow & N^{\mathbb{Q}}\\
\downarrow &  & \downarrow\\
G & \longrightarrow & N^{\mathbb{Q}}G
\end{array}
.
\]
In what follows we consider $G$ as a subgroup of $N^{\mathbb{Q}}G.$ For each
$m\in\mathbb{N},$ $G$ then normalizes the subgroup $N^{1/m}$ of $N^{\mathbb{Q}%
},$ and $\left\vert N^{1/m}G:G\right\vert =\left\vert N^{1/m}:N\right\vert
<\infty$ (cf. \cite{GPS}, Lemmas 2.10, 2.7).

Mal'cev completion is a right-exact functor on $\mathcal{T}$-groups;
in particular if $M\vartriangleleft N$ and $N/M$ is torsion
free there is a natural surjection $N^{\mathbb{Q}}\rightarrow(N/M)^{\mathbb{Q}%
}$.

\section{Semisimple splitting\label{ssspl}}

\subsection{Nilpotent supplements}

For convenience, let us that that a virtually polycyclic group $G$ is
\emph{nice} if $\mathrm{Fit}(G)$ is a $\mathcal{T}$-group and $G/\mathrm{Fit}%
G)$ is free abelian. Basic results (\cite{S1} Chapter 1) imply that every
virtually polycyclic group $G_{1}$ has a nice characteristic subgroup $G$ of
finite index. It follows from \cite{S1} Chapter 3, Theorem 3, that $G$ has a
nilpotent subgroup $C$ such that $\mathrm{Fit}(G)C$ has finite index in $G$.
Such a $C$ is called a \emph{nilpotent supplement} to $\mathrm{Fit}(G)$ in
$\mathrm{Fit}(G)C$.

For any group $G$ and~$N\vartriangleleft G$, we denote by
\[
\mathcal{X}(G,N)
\]
the set of \emph{maximal nilpotent supplements} to $N$ in $G$. This set is
obviously invariant under conjugation by $G,$ and if $C\in\mathcal{X}(G,N)$
and $g=cv$ ($v\in N,~c\in C$) then $C^{g}=C^{v}$, so the $G$-orbits are the
same as the $N$-orbits. FInally, if $G$ is profinite it is clear that
$\mathcal{X}(G,N)$ consists of \emph{closed} subgroups. I shall use these
obsevations without special mention.

\begin{lemma}
\label{maxymaxy}Let $G=NC$ where $G$ is polycyclic, $N\vartriangleleft G$ and
$N$, $C$ are nilpotent. Then the following are equivalent for $C\leq G$:%
\begin{align*}
C  &  \in\mathcal{X}(G,N)\\
C  &  =\mathrm{N}_{G}(C)\\
\overline{C}  &  =\mathrm{N}_{\widehat{G}}(\overline{C})\\
\overline{C}  &  \in\mathcal{X}(\widehat{G},\overline{N}).
\end{align*}

\end{lemma}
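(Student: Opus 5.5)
The plan is to prove the cycle of implications $C\in\mathcal{X}(G,N)\Rightarrow C=\mathrm{N}_G(C)\Rightarrow \overline{C}=\mathrm{N}_{\widehat G}(\overline C)\Rightarrow \overline C\in\mathcal{X}(\widehat G,\overline N)\Rightarrow C\in\mathcal{X}(G,N)$. Throughout, $C$ is a nilpotent subgroup with $G=NC$, so that it is a supplement to $N$. The key tool is the normalizer condition, applied in $G$ and in $\widehat G$ to subgroups lying between $C$ and a nilpotent group.

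\emph{First and second steps.} Suppose $C\in\mathcal{X}(G,N)$ and put $M=\mathrm{N}_G(C)$. Then $C\vartriangleleft M$ and $M=(M\cap N)C$ by the modular law. Since $M\cap N\vartriangleleft M$ and $C\vartriangleleft M$ are both nilpotent, Fitting's theorem makes their product $M$ nilpotent, and $M$ is a supplement containing $C$. Maximality gives $M=C$. For the second step, (\ref{subgps}) gives $\mathrm{N}_{\overline G}(\overline C)=\overline{\mathrm{N}_G(C)}=\overline C$ directly.

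\emph{Third step.} Assume $\overline C$ is self-normalizing in $\widehat G$, and let $D\geq\overline C$ be a nilpotent closed supplement to $\overline N$. If $D\neq\overline C$, the normalizer condition in the profinite nilpotent group $D$ gives $\mathrm{N}_D(\overline C)>\overline C$. I would justify this condition by passing to finite quotients $D/K$ with $K$ open normal: $\overline CK/K$ is proper in $D/K$ for suitable $K$, and a compactness (inverse limit) argument on the nonempty finite sets of elements normalizing $\overline C$ modulo $K$ but lying outside $\overline CK$ produces the required element. Hence $D=\overline C$. A maximal nilpotent supplement containing $\overline C$ must be closed, since the closure of a nilpotent group is nilpotent, so this shows $\overline C\in\mathcal{X}(\widehat G,\overline N)$.

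\emph{Fourth step.} Let $D\geq C$ be a nilpotent subgroup of $G$; such a $D$ is automatically a supplement to $N$. Then $\overline D\geq\overline C$ is nilpotent, so maximality of $\overline C$ gives $\overline D=\overline C$. Intersecting with $G$ yields $D=\overline D\cap G=\overline C\cap G=C$, so $C\in\mathcal{X}(G,N)$.

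The main obstacle is the third step, namely making the normalizer condition rigorous for closed subgroups of profinite nilpotent groups. The alternative is to reduce the statement to the pronilpotent case by writing $D=\prod_p D_p$ and arguing prime by prime. I would try the inverse-limit argument first.
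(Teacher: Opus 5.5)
Your cycle of implications is correct in outline and close to the paper's proof. The paper proves the first two conditions equivalent exactly as you do, using that $\mathrm{N}_G(C)=\mathrm{N}_N(C)C$ is nilpotent. It gets the middle two from (\ref{subgps}), and the last two by repeating the first argument in $\widehat G=\overline N\,\overline C$. Your direct passage from the fourth condition back to the first, via $\overline D\cap G=D$, is a perfectly good alternative way to close the loop. You should say explicitly why $\overline C$ is a supplement at all: $\overline N\,\overline C$ is closed and contains the dense subgroup $G$.

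The flaw is in the step you single out as the main obstacle, and your inverse-limit sketch does not work. For open normal $K'\leq K$, an element outside $\overline CK'$ can lie in $\overline CK$. So the projection $D/K'\rightarrow D/K$ does not carry your sets into one another, and there is no inverse system. More decisively, no argument that uses only the nilpotency of the finite quotients $D/K$ can succeed, because the normalizer condition fails for closed subgroups of pro-$p$ groups. Take $\mathbb{Z}_p\rtimes\mathbb{Z}_p$ with $p$ odd, where a generator $t$ of the second factor acts by multiplication by $1+p$. Then $t^a=ta^{-p}$ for $a$ in the first factor, so no nontrivial such $a$ normalizes $\overline{\langle t\rangle}$, and this proper closed subgroup is self-normalizing. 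Splitting $D$ into its Sylow factors runs into the same problem.

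What you need is that $D$ is nilpotent of finite class as an abstract group, which is what membership of $\mathcal{X}(\widehat G,\overline N)$ requires. The paper uses this tacitly when it says that $C$ is maximal nilpotent if and only if $\mathrm{N}_N(C)\leq C$. The normalizer condition then holds for every subgroup, with no topology involved. Let $H<D$, let $Z_i$ be the $i$-th term of the upper central series of $D$, and take $i$ maximal with $Z_i\leq H$. Then $[Z_{i+1},H]\leq Z_i\leq H$, so $HZ_{i+1}\leq\mathrm{N}_D(H)$ and $HZ_{i+1}>H$. Applying this with $H=\overline C$ completes your third step, and with it the proof.
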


\begin{proof}
Put $D=\mathrm{N}_{N}(C)$. Then $DC$ is nilpotent, so $C$ is maximal nilpotent
if and only if $D\leq C.$ This implies the first equivalence since
$\mathrm{N}_{G}(C)=DC$. The last equivalence follows similarly from the fact
that $\widehat{G}=\overline{N}\overline{C}$. The middle one follows by
(\ref{subgps}).
\end{proof}

\bigskip

The next result, together with its proof, is essentially a translation into
the profinite realm of \cite{S1} Chap. 3, Thm. 4.

\begin{proposition}
\label{compconj}Let $G$ be a polycyclic group, $N$ a normal $\mathcal{T}%
$-subgroup of $G$, and $C\in\mathcal{X}(G,N)$. Then there is a finite subset
$X$ of $N^{\mathbb{Q}}$ such that
\begin{align}
\mathcal{X}(G,N)  &  =\bigcup\limits_{x\in X}\left\{  C^{xy}\mid y\in
N\right\}  ,\label{Xcong}\\
\mathcal{X}(\widehat{G},\overline{N})  &  =\bigcup\limits_{x\in X}\left\{
\overline{C}^{xy}\mid y\in\overline{N}\right\}  . \label{Xhatconj}%
\end{align}

\end{proposition}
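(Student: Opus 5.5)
The plan is to follow the proof of \cite{S1} Chap. 3, Thm. 4, working throughout inside the groups $N^{\mathbb{Q}}G$ and $\overline{N}^{\mathbb{Q}}\widehat{G}$, and to induct on the Hirsch length (or nilpotency class) of $N$ along the upper central series of $N$. First reduce to the case $G=NC$, replacing $G$ by $NC$. This is harmless: every $C'\in\mathcal{X}(G,N)$ has $NC'$ of finite index in $G$, and up to conjugacy there are only finitely many such finite-index subgroups containing $N$, since they correspond to subgroups of the finitely generated abelian-by-finite quotient. Their profinite analogues behave correspondingly by (\ref{subgps}) and (\ref{subconj}).

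For the base case, $N$ abelian (free abelian, written additively), fix $C$. Any $C'\in\mathcal{X}(G,N)$ with $NC'=NC$ determines a derivation, i.e.\ a $1$-cocycle $\delta:C\to N$, via $c\mapsto$ the $N$-component of the element of $C'$ lying over $c$. Conjugating by $y\in N$ changes $\delta$ by a coboundary. So the $N$-orbits in $\mathcal{X}(G,N)$ correspond to those classes in $H^{1}(C/(C\cap N),N)$ which give maximal nilpotent supplements.

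The key facts are two. First, since $C$ acts nilpotently only on part of $N$, write $N\otimes\mathbb{Q}=V_{0}\oplus V_{1}$, where $V_0$ is the part on which $C$ acts unipotently and $V_{1}$ the part with no trivial composition factors. Second, $H^{1}(C,V_{1}\otimes\mathbb{Q})=0$, so every cocycle becomes a coboundary after passing to $N^{\mathbb{Q}}$, indeed to $N^{1/m}$ for a fixed $m$ bounded by the torsion exponent of $H^{1}(C,N_1)$, which is finite because $H^{1}$ is finitely generated and torsion. Maximality forces the $V_{0}$-component to be absorbed into $C'$, i.e.\ $C'\supseteq \mathrm{C}_N(C')$-type conditions as in Lemma \ref{maxymaxy}. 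Hence every $C'$ equals $C^{x}$ with $x\in N^{1/m}$. Since $N^{1/m}\cap\mathrm{N}(C)$-cosets modulo $N$ are finite in number ($|N^{1/m}:N|<\infty$), we obtain the finite set $X$ of coset representatives of $N$ in $N^{1/m}$. The same cohomological computation applies verbatim to $\widehat{G}$, because $H^{1}(\overline{C},\overline{N})\cong H^{1}(C,N)\otimes\widehat{\mathbb{Z}}$ for these finitely generated modules, and $\overline{N}^{1/m}=N^{1/m}\overline{N}$. So the \emph{same} $X$ works, giving (\ref{Xhatconj}).

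For the inductive step, take $Z=\mathrm{Z}(N)$ isolated so that $N/Z$ is a $\mathcal{T}$-group, and apply induction in $G/Z$. The image of any $C'$ is contained in some maximal nilpotent supplement, which is conjugate to $CZ/Z$ by an element $xy$ with $x$ in a finite set and $y\in N/Z$. Lift $x$ via the surjection $N^{\mathbb{Q}}\to (N/Z)^{\mathbb{Q}}$. Then $C'^{(xy)^{-1}}\leq CZ$, and one applies the abelian case inside $CZ$ (normal subgroup $Z$), obtaining a finite set of $Z^{\mathbb{Q}}$-representatives. The resulting $X$ is the product of the two finite sets. The profinite case runs in parallel, using $\widehat{G}/\overline{Z}\cong\widehat{G/Z}$.

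I expect the main obstacle to be the profinite half of the abelian step. Specifically, one must verify that the conjugating elements for $\overline{C'}$ can be taken in $N^{1/m}\overline{N}$ with the \emph{same} $m$. The concern is that $\widehat{\mathbb{Z}}$-coefficients could produce new cohomology classes. I would handle this by noting that $H^{1}(C,N)$ is a finite group (after removing the $V_0$ part), and that continuous cohomology of $\overline{C}$ in $\overline{N}$ is its completion, hence the same finite group. I would also check maximality via Lemma \ref{maxymaxy}.
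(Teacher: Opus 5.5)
\textbf{The inductive step has a genuine gap.} Because $Z=\mathrm{Z}(N)$ is not central in $G$, passing to $G/Z$ does not preserve maximality. The image $CZ/Z$ of some $C\in\mathcal{X}(G,N)$ need not be maximal nilpotent in $G/Z$, and preimages of nilpotent subgroups of $G/Z$ need not be nilpotent.

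Here is a counterexample. Let $N=H\times\langle w\rangle$, where $H=\langle x,y\rangle$ is the Heisenberg group and $z=[x,y]$. Let $c$ act on $N$ by $x\mapsto xw$, $y\mapsto y$, $w\mapsto w^{-1}$, and put $G=N\rtimes\langle c\rangle$ with $c$ of infinite order.
\begin{itemize}
\item $C=\langle x^{2}w,y,z,c\rangle$ lies in $\mathcal{X}(G,N)$, since one checks $\mathrm{N}_{N}(C)=C\cap N$.
\item $G/Z$ is abelian, so $\mathcal{X}(G/Z,N/Z)=\{G/Z\}$, whereas $CZ/Z$ has index $2$ in $G/Z$.
\end{itemize}
So your assertion that the maximal supplement containing $C'Z/Z$ is conjugate to $CZ/Z$ fails, and the inductive hypothesis for $G/Z$ gives no information. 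Moreover, $C$ is not a supplement to $Z$ in the relevant preimage, which here is $G$ itself (not nilpotent). Hence the abelian case cannot be invoked for $(CZ,Z)$. Even when $CZ/Z$ is maximal, you would also need $C'Z/Z$ maximal to get $C'^{(xy)^{-1}}Z=CZ$.

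The paper avoids this by choosing the filtration so that maximality is automatic.
\begin{itemize}
\item It first factors out $N\cap\mathrm{Z}(G)$. This is central and lies in every maximal nilpotent supplement, so $\mathcal{X}$ corresponds exactly.
\item When $N\cap\mathrm{Z}(G)=1$, it uses a top factor $A=N/M$ that is torsion-free, rationally irreducible and nontrivial as a $C$-module. Here $M$ is chosen above the largest section of $N/N'$ (modulo torsion) on which $G$ acts nilpotently.
\item Every nilpotent supplement then maps to a complement of $A$, hence to a maximal one. The special abelian case gives $x$ with $\overline{M}E^{v}=\overline{M}\,\overline{C}^{x}$, and one inducts on $M$ inside $MC^{x}$.
\end{itemize}

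\textbf{There is also a smaller gap in the choice of $X$.} If $X$ is a full set of coset representatives of $N$ in $N^{1/m}$, neither (\ref{Xcong}) nor (\ref{Xhatconj}) holds as an equality, because $C^{x}$ need only lie in $N^{1/m}G$. You must take $X=\{x : C^{x}\in\mathcal{X}(G,N)\}$. Then, for (\ref{Xhatconj}), you must show that the $x$ produced by an arbitrary $E\in\mathcal{X}(\widehat{G},\overline{N})$ satisfies $C^{x}\leq G$. The paper does this via $C^{x}\leq N^{1/m}G\cap\overline{G}=G$ (closure taken in $\widehat{N^{1/m}G}$), together with Lemma \ref{maxymaxy}.

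Your cohomological treatment of the abelian layer is a genuinely different route from the paper's explicit cocycle computation with an element $b$ satisfying $N(b-1)\geq mN$. It absorbs the unipotent part into every maximal supplement, uses finiteness of $H^{1}$ on the rest, and compares with continuous cohomology of $\overline{C}$. With the correction above it is workable; the problem lies only in how you stack the layers.

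Finally, your opening reduction is unnecessary. Members of $\mathcal{X}(G,N)$ are supplements, so $NC'=G$ by definition. The claim there about finitely many finite-index subgroups containing $N$ is false in general anyway.
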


\begin{proof}
I give the argument for (\ref{Xhatconj}); (\ref{Xcong}) will follow in the
same way, just ignoring `bars' and `hats'.

Consider first the special case where $N$ is abelian, and rationally
irreducible and non-trivial as a $C$-module. This implies that $N\cap
\mathbf{Z}(C)=N\cap\mathbf{Z}(G)=1$, which implies that $N\cap C=1$. So
$G=N.C$ is a semidirect product. Put $K=\mathrm{C}_{C}(N)$ and choose
\thinspace$b\in C\smallsetminus K$ so that $[b,C]\subseteq K$.

Write $N$ additively for the moment. Then $N(b-1)\geq mN$ for some
$m\in\mathbb{N}$. Let $X_{0}$ be a transversal to the cosets of $mN$ in $N$;
then $X_{1}:=m^{-1}X_{0}$ is a transversal to the cosets of $N$ in $m^{-1}N$,
and also to the cosets of $\overline{N}$ in $m^{-1}\overline{N}=\overline
{N}+m^{-1}N$. Let
\[
X=\{x\in X_{1}\mid C^{x}\in\mathcal{X}(G,N)\}.
\]

Suppose that $\widehat{G}=\overline{N}E$, where $E$ is closed and nilpotent.
Then $\overline{N}\cap E=1$, so in fact$\ E\in\mathcal{X}(\widehat{G}%
,\overline{N})$. Now $\widehat{G}=\overline{N}.\overline{C}$ is semidirect and
$\overline{N}\cap\mathrm{Z}(\widehat{G})=1$. If $E\cap\overline{N}\neq1$ then
$\overline{N}\cap\mathrm{Z}(\widehat{G})\geq\mathrm{Z}(E)\cap\overline{N}%
\neq1$; so $\widehat{G}=\overline{N}.E$ is also semidirect. Define the
$1$-cocycle $\delta:\overline{C}\rightarrow\overline{N}$ by
\[
a.a\delta\in E~~(a\in\overline{C}).
\]
Note that then
\begin{equation}
E=\{a.a\delta\mid a\in\overline{C}\}. \label{wholeE}%
\end{equation}

Now $\overline{K}\cap\ker\delta\vartriangleleft\overline{C}$, so if
$\overline{K}\delta\neq0$ there exists $a\in\overline{K}$ with $a\delta\neq0$
and $[a,\overline{C}]\subseteq\ker\delta$. Then for $c\in\overline{C}$ we have
$(ac)\delta=(ca)\delta$ which gives%
\[
a\delta c=(ac)\delta-c\delta=(ca)\delta-c\delta=a\delta
\]
whence $a\delta\in C_{\overline{N}}(\overline{C})=0,$ contradiction. It
follows that $\overline{K}\delta=0,$ and as $[b,\overline{C}]\subseteq
\overline{K}$ we have $(ab)\delta=(ba)\delta$ for all $a\in\overline{C}$.

Now $mb\delta\in m\overline{N}\leq\overline{N}(b-1)$ so $b\delta=(v-x)(b-1)$
for some $v\in\overline{N}$ and $x\in X_{1}.$

Let $a\in\overline{C}$. Then $(ab)\delta=(ba)\delta$ implies%
\[
a\delta(b-1)=b\delta(a-1)=(v-x)(b-1)(a-1).
\]
As $[a,b]\in\overline{K}$ and $\mathrm{C}_{\overline{N}}(b)=0$ it follows that
$a\delta=(v-x)(a-1).$ Reverting to multiplicative notation we see that%
\[
a.a\delta=a.(vx^{-1})^{a}(vx^{-1})^{-1}=va^{x}v^{-1};
\]
with (\ref{wholeE}) this implies that $E^{v}=\overline{C}^{x}$.

It remains to show that $x\in X$. Put $G_{1}=N^{1/m}G$, so $X\subset G_{1}$.
We have%
\[
C^{x}\leq G_{1}\cap(\overline{N}E^{v})\leq G_{1}\cap\overline{G}=G,
\]
so $C^{x}\in\mathcal{X}(G,N)$ by Lemma \ref{maxymaxy}.

For the general case we argue by induction on $h(N)$. Of course if $N=1$ the
result is trivial: $G$ (resp. $\overline{G}$) is the unique maximal nilpotent
subgroup of $G$ (resp. $\widehat{G}$), and we take $X=\{1\}$.

Suppose first that $Q:=N\cap\mathrm{Z}(G)\neq1$. Note that $N/Q$ is again
torsion-free. We apply the inductive hypothesis to $G/Q,$ noting that $C\geq
Q\,,$ $Q$ is centralized by $N^{\mathbb{Q}},$ and $\overline{Q}$ is contained
in every maximal closed nilpotent subgroup of $\widehat{G}$. A preimage in
$N^{\mathbb{Q}}$ of a suitable finite subset of $(N/Q)^{\mathbb{Q}}$ clearly
does what is required for the set $X$.

Assume henceforth that $N\cap\mathrm{Z}(G)=1<N$.

Let $T/N^{\prime}$ be the torsion subgroup of $N/N^{\prime}$ and let $Z/T$ be
biggest $G$-submodule of $N/T$ on which $G$ acts nilpotently. Then $Z<N$ by
\cite{S1} Chap. 3, Prop. 3, and $N/Z$ is torsion-free. Choose
$M\vartriangleleft G$ with $Z\leq M<N$ maximal subject to $A:=N/M$ being
torsion free. Then $A$ is rationally irreducible as a $C$-module, and it is
nontrivial in view of \cite{S1} Chap. 3 \S A, Cor. 1 (which asserts that
$\mathrm{C}_{N/Z}(C)=0$ implies $\mathrm{C}_{A}(C)=0$ ). Write $\symbol{126}%
:G\rightarrow G/M$ for the quotient map, and also for the induced maps
$\widehat{G}\rightarrow\widehat{G}/\overline{M}$ and $N^{\mathbb{Q}%
}\twoheadrightarrow A^{\mathbb{Q}}$. Then \thinspace$\widetilde{G}%
=A\widetilde{C}$ satisfies the conditions for the `special case', with
$A=\widetilde{N}$ in place of $N$ and $\widetilde{C}$ for $C$. Let $X^{\dag
}\subset A^{\mathbb{Q}}$ be the resulting finite set, and pick a finite subset
$X$ of $N^{\mathbb{Q}}$ so that $X^{\dag}=\widetilde{X}$.

Now suppose again that $\widehat{G}=\overline{N}E$, where $E$ is closed and
nilpotent. Then there exist $x\in X$ and $v\in\overline{N}$ such that%
\begin{align*}
\widetilde{N}\widetilde{C^{x}}  &  =\widetilde{G}\\
\widetilde{E^{v}}  &  =\widetilde{\overline{C}^{x}}.
\end{align*}
Thus $NC^{x}=G$ and%
\[
\overline{M}E^{v}=\overline{M}\overline{C}^{x}=\overline{MC^{x}}.
\]
I claim that the original hypotheses are now satisfied with $MC^{x}$ for $G$,
$M$ for $N$ and $C^{x}$ for $C$. Accepting this for now, we identify
$\overline{MC^{x}}$ with $\widehat{MC^{x}}$, and infer by inductive hypothesis
that there is a finite subset $Y$ of $M^{\mathbb{Q}}$ such that (a)
$MC^{xy}=MC^{x}$ for each $y\in Y$, and (b) every member $E$ of $\mathcal{X}%
(\overline{MC^{x}},\overline{M})$ is conjugate to $\overline{C^{xy}}$ for some
$y\in Y$. In particular, there exist $y\in Y$ and $w\in\overline{M}$ such that
$E^{vw}=\overline{C^{xy}}$.

Evidently the set $XY\subseteq N^{\mathbb{Q}}$ fulfils our requirements.

It remains to verify that $C^{x}$ is maximal nilpotent in $MC^{x}$. To see
this, suppose that $C^{x}<D\leq G$ and $D$ is nilpotent. Then $D=(N\cap
D)C^{x}$ so $N\cap D\neq1$ and then $N\cap\mathrm{Z}(D)\neq1$. Hence
$1\neq\mathrm{Z}(N)\cap\mathrm{Z}(D)\leq N\cap\mathrm{Z}(G)$, contradicting hypothesis.
\end{proof}

\medskip

Write $\mathcal{X}(G,N)/G$ for the set of $G$-conjugacy classes of subgroups
in $\mathcal{X}(G,N)$.

\begin{corollary}
\label{finconjmaxns}With $G$ and $N$ as above, suppose that $\mathcal{X}(G,N)$
is nonempty. Then the mappings%
\begin{align*}
\mathfrak{u}  &  :C\longmapsto\overline{C}~~(C\in\mathcal{X}(G,N)~)\\
\mathfrak{d}  &  :E\longmapsto E\cap G~~(E\in\mathcal{X}(\widehat{G}%
,\overline{N})~)
\end{align*}
induce mutually inverse bijections $\mathcal{X}(G,N)/G\rightarrow
\mathcal{X}(\widehat{G},\overline{N})/\widehat{G}\rightarrow\mathcal{X}%
(G,N)/G$; these sets are finite.
\end{corollary}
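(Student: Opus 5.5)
The plan is to derive everything from Proposition \ref{compconj} together with the profinite facts (\ref{subgps}) and (\ref{subconj}) and Lemma \ref{maxymaxy}. I read ``induce'' at the level of conjugacy classes: $\mathfrak{d}$ is to be evaluated on those $E$ in a $\widehat{G}$-class that are closures of their intersection with $G$, and I will show that every class contains such an $E$. I will also point out why this reading cannot be avoided.

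\emph{Step 1: $\mathfrak{u}$ is well defined on classes.} Let $C\in\mathcal{X}(G,N)$. Lemma \ref{maxymaxy} gives $\overline{C}\in\mathcal{X}(\widehat{G},\overline{N})$. Conjugating $C$ by $g\in G$ conjugates $\overline{C}$ by the same $g$. Hence $\mathfrak{u}$ induces a map $\mathcal{X}(G,N)/G\rightarrow\mathcal{X}(\widehat{G},\overline{N})/\widehat{G}$.

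\emph{Step 2: $\mathfrak{u}$ is surjective on classes.} Let $E\in\mathcal{X}(\widehat{G},\overline{N})$. By (\ref{Xhatconj}) we have $E=\overline{C}^{xy}$ with $x\in X$ and $y\in\overline{N}$. In the proof of Proposition \ref{compconj} the set $X$ was chosen so that $C^{x}\in\mathcal{X}(G,N)$. In the special case this is the definition of $X$. In the inductive step the elements $xy$ with $y\in Y$ again satisfy $C^{xy}\leq MC^{x}\leq G$, and these are maximal nilpotent by Lemma \ref{maxymaxy}. Put $D=C^{x}$. Then $E=\overline{D}^{y}$ with $D\in\mathcal{X}(G,N)$, so $E$ lies in the class of $\overline{D}=\mathfrak{u}(D)$. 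Moreover $\overline{D}\cap G=D$ by the ``profinite technicalities''. So $E':=\overline{D}$ is a representative of the class of $E$ satisfying $\mathfrak{d}(E')=E'\cap G=D\in\mathcal{X}(G,N)$ and $\overline{\mathfrak{d}(E')}=E'$. Thus $\mathfrak{d}$ applied to such representatives lands in $\mathcal{X}(G,N)$.

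\emph{Step 3: injectivity and well-definedness.} Let $D_{1},D_{2}\in\mathcal{X}(G,N)$ with $\overline{D_{1}}$ and $\overline{D_{2}}$ conjugate in $\widehat{G}$. Then (\ref{subconj}) gives that $D_{1}$ and $D_{2}$ are conjugate in $G$. This single fact does two jobs:
\begin{itemize}
\item it shows that the map on classes induced by $\mathfrak{u}$ is injective;
\item it shows that the class of $\mathfrak{d}(E')$ does not depend on which representative of the form $\overline{D}$ is chosen.
\end{itemize}
From Step 2, for such representatives $\mathfrak{u}\mathfrak{d}(\overline{D})=\overline{D}$. From $\overline{D}\cap G=D$ we get $\mathfrak{d}\mathfrak{u}(D)=D$. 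Hence the two induced maps are mutually inverse bijections. Finiteness of the class sets then follows from (\ref{Xcong}): $\mathcal{X}(G,N)$ is the union of the $|X|$ sets $\{C^{xy}\mid y\in N\}$, and each of these lies in one $G$-class (indeed $N$-class) once we use only those $x$ with $C^{x}\leq G$.

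The main obstacle is that $E\mapsto E\cap G$ cannot be applied literally to every member of a class. For a general $y\in\overline{N}\smallsetminus N$ the intersection $\overline{D}^{y}\cap G$ may be small. For example, take $G=\mathbb{Z}^{2}\rtimes\mathbb{Z}$ with a hyperbolic action and $D=\mathbb{Z}$; then $c^{y}\in G$ forces $y(c-1)\in N$, and generically this intersection is trivial. So the crux is Step 2: every $\widehat{G}$-class contains an $E$ with $E=\overline{E\cap G}$ and $E\cap G\in\mathcal{X}(G,N)$. This uses the specific choice of $X$ in Proposition \ref{compconj}, namely that $C^{x}\leq G$ for all $x\in X$. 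On these representatives $\mathfrak{d}$ is the honest intersection map, and the bijection statement follows as above.
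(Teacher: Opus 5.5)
Your proof is correct and follows essentially the same route as the paper's proof: surjectivity on classes via Proposition \ref{compconj}, injectivity via (\ref{subconj}), $\overline{C}\cap G=C$ for the inverse, and finiteness from Proposition \ref{compconj}. You do not need to reopen the inductive proof to get $C^{x}\in\mathcal{X}(G,N)$ for $x\in X$, since (\ref{Xcong}) with $y=1$ gives it directly; your caveat that $\mathfrak{d}$ must be evaluated on representatives of the form $\overline{D}$ is a valid point that the paper's one-line ``$\mathfrak{ud}$ is the identity'' passes over.
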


\begin{proof}
It is obvious that $\mathfrak{u}$ respects conjugacy. Lemma \ref{maxymaxy}
shows that $\mathfrak{u}$ maps $\mathcal{X}(G,N)$ into $\mathcal{X}%
(\widehat{G},\overline{N})$. If $E\in\mathcal{X}(\widehat{G},\overline{N})$
then $E$ is conjugate to $\overline{C^{x}}$ for some $C\in\mathcal{X}(G,N)$
and $x\in N^{\mathbb{Q}}$, and then $C^{x}\in\mathcal{X}(G,N)$ by the same
corollary, so $E^{w}\in\mathcal{X}(G,N)\mathfrak{u}$ for some $w\in
\widehat{G}$. Thus $\mathfrak{u}$ is surjective on conjugacy classes, and it
is injective on conjugacy classes in view of (\ref{subconj}). From first
principles $\mathfrak{ud}$ is the identity on the set of all subgroups of $G,$
and the first claim follows. The second claim is immediate from Prop.
\ref{compconj}.
\end{proof}

\subsection{Splittings\label{sss&l}}

In \cite{GPS}, \S 7 we introduced a construction known as `semisimple
splitting' (a variation of an original construction due to L. Auslander). This
assigns to a (`splittable') polycyclic group a larger group of the form
$M\rtimes T,$ where $M$ is a $\mathcal{T}$-group and $T$ is an abelian
subgroup of $\mathrm{Aut}(M)$. This is explained again in\ \cite{S1} Chap. 7,
Section B, which may be consulted for details of some proofs.

In this subsection, we fix a nice polycyclic group $G$ with Fitting subgroup
$N$. Recall that $\mathcal{X}(G,N)$ denotes the set of maximal nilpotent
supplements to $N$ in $G$ (which may of course be empty in general). We set%
\[
\mathcal{X}^{\ast}(G,N)=\{C\in\mathcal{X}(G,N)\mid c_{s}^{\ast}~\text{fixes
}N~(\forall c\in C)\},
\]
where as usual $c^{\ast}$ denotes the action of $c$ by conjugation, and
$c_{s}^{\ast}$ its semisimple component as an automorphism of $N^{\mathbb{Q}}%
$. As $G=NC$ for each $C\in$ $\mathcal{X}(G,N)$ and $\mathcal{X}^{\ast}(G,N)$
is clearly invariant under $N^{\ast}$, we see that $\mathcal{X}^{\ast}(G,N)$
is invariant under conjugation by $G$.

The group $G$ is said to be \emph{splittable }if $G$ is nice and
$\mathcal{X}^{\ast}(G,N)$ is nonempty; $G$ is \emph{lattice-splittable }if, in
addition, $N=\mathrm{Fit}(G)$ is a lattice $\mathcal{T}$-group (cf. \cite{S1}
Chapter 10, \S D).

The set $\mathcal{X}^{\ast}(\widehat{G},\overline{N})$ is defined analogously:
for each prime $p$ the $p$-component $(c^{\ast})_{p,s}$ is supposed to fix
$\mathbb{Z}_{p}\log N$ inside $\mathbb{Q}_{p}\log N$.

\begin{lemma}
\label{X*conj}Provided $\mathcal{X}(G,N)\neq\varnothing$, the map
$D\longmapsto\overline{D}$ induces a bijection $\mathcal{X}^{\ast
}(G,N)/G\rightarrow\mathcal{X}^{\ast}(\widehat{G},\overline{N})/\widehat{G}$.
\end{lemma}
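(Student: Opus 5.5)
We will deduce the lemma from Corollary \ref{finconjmaxns}, which already gives mutually inverse bijections $\mathfrak{u}$, $\mathfrak{d}$ between $\mathcal{X}(G,N)/G$ and $\mathcal{X}(\widehat{G},\overline{N})/\widehat{G}$. Since both starred sets are unions of conjugacy classes (for $\mathcal{X}^{\ast}(\widehat{G},\overline{N})$ this holds because $\widehat{G}=\overline{N}\,\overline{C}$ and the condition is invariant under $\overline{N}$, exactly as in the discrete case), it suffices to prove that for $C\in\mathcal{X}(G,N)$,
\[
C\in\mathcal{X}^{\ast}(G,N)\Longleftrightarrow\overline{C}\in\mathcal{X}^{\ast}(\widehat{G},\overline{N}).
\]
Then $\mathfrak{u}$ restricts to an injection $\mathcal{X}^{\ast}(G,N)/G\rightarrow\mathcal{X}^{\ast}(\widehat{G},\overline{N})/\widehat{G}$. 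It is also surjective: any $E\in\mathcal{X}^{\ast}(\widehat{G},\overline{N})$ is conjugate to some $\overline{C}$ with $C\in\mathcal{X}(G,N)$, and $\overline{C}$ lies in $\mathcal{X}^{\ast}$ by invariance, so $C\in\mathcal{X}^{\ast}(G,N)$ by the equivalence.

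For the forward direction, let $c\in C$. The automorphism $c^{\ast}$ of $N^{\mathbb{Q}}$ is given by a matrix over $\mathbb{Q}$ acting on $\mathcal{L}_{\mathbb{Q}}(N)$. Its Jordan decomposition over $\mathbb{Q}_{p}$ is the same as over $\mathbb{Q}$, because Jordan components are polynomials in the matrix. If $c_{s}^{\ast}$ fixes $N$, it fixes the closure of $N$ in $N^{\mathbb{Q}_{p}}$, and hence preserves $\mathbb{Z}_{p}\log N$ (identifying $\widehat{N}_{p}$ with its image in the Mal'cev completion via Lemma \ref{Jordanfor beta}). This gives the condition for all elements of $C$. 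To pass to all $a\in\overline{C}$, we use that the set of $a\in\overline{C}$ whose $p$-semisimple part stabilizes the lattice is closed. Here the key observation is that $\overline{C}$ acts on $\overline{N}$ through a nilpotent group, so $a\mapsto(a^{\ast})_{p,s}$ is a continuous homomorphism on $\overline{C}$: it is the semisimple part of a commutative or nilpotent linear group, and it is continuous because the semisimple part is the projection onto the torus part of the Zariski closure. The stabilizer of a lattice is open, so the set in question is a closed subgroup containing the dense subgroup $C$, hence all of $\overline{C}$.

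For the converse, assume $\overline{C}\in\mathcal{X}^{\ast}(\widehat{G},\overline{N})$ and let $c\in C$. Then $c_{s}^{\ast}$ is a rational automorphism of $\mathcal{L}_{\mathbb{Q}}(N)$ which, for every $p$, preserves $\mathbb{Z}_{p}\log N$. For a lattice group the relevant object is simply $\mathbb{Z}\log N=\bigcap_{p}(\mathbb{Q}\log N\cap\mathbb{Z}_{p}\log N)$, so $c_{s}^{\ast}$ fixes $\log N$ and therefore $N$. In the non-lattice case we instead argue with $N=N^{\mathbb{Q}}\cap\overline{N}$: $c_{s}^{\ast}$ maps $N$ into $N^{\mathbb{Q}}$ and into $\prod_{p}\widehat{N}_{p}=\widehat{N}$, hence into $N$. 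This local-global step is routine.

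I expect the main obstacle to be making precise the identification of $c_{s}^{\ast}$, viewed on $N^{\mathbb{Q}}$, with its $p$-adic counterparts acting on $\widehat{N}_{p}$ inside a common ambient group. The cleanest route is to work throughout in $\mathrm{GL}_{n}$ via the canonical embedding $\beta_{N}$, using Lemma \ref{Jordanfor beta} and its $\mathbb{Q}_{p}$-analogue.
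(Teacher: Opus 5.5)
Your proposal is correct and follows the paper's route: reduce to the elementwise equivalence $C\in\mathcal{X}^{\ast}(G,N)\Leftrightarrow\overline{C}\in\mathcal{X}^{\ast}(\widehat{G},\overline{N})$ for $C\in\mathcal{X}(G,N)$, use the conjugation-invariance of both starred sets, and restrict the bijection of Corollary \ref{finconjmaxns}. The only difference is that the paper takes this equivalence from the proof of \cite{S1} Chapter 10, Proposition 8, whereas you prove it directly: one direction by continuity of $a\mapsto(a^{\ast})_{p,s}$ on $\overline{C}$ together with density of $C$, and the other by the local--global step via $N=N^{\mathbb{Q}}\cap\overline{N}$.
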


\begin{proof}
The proof of \cite{S1} Chapter 10, Proposition 8 (cf. \cite{GPS} \S 6,
Proposition 2) shows that for $D\in\mathcal{X}(G,N)$, we have $D\in
\mathcal{X}^{\ast}(G,N)$ if and only if $\overline{D}\in\mathcal{X}^{\ast
}(\widehat{G},\overline{N})$. The claim now follows from Corollary
\ref{finconjmaxns}.
\end{proof}

\bigskip\bigskip

We assume for the rest of this subsection that $G$ is
\emph{lattice-splittable}$.$

Fix $C\in\mathcal{X}^{\ast}(G,N)$. For $c\in C$ we extend the automorphism
$c_{s}^{\ast}$ of $N$ to an automorphism $\tau(c)$ of $G$ that centralizes
$C$, and define $u(c)\in\mathrm{Hol}(G)$ as follows:%
\[
u(c)=c.\tau(c)^{-1}.
\]

Then $u:C\rightarrow\mathrm{Hol}(G)$ and $\tau:C\rightarrow\mathrm{Aut}(G)$
are homomorphisms.%
\[
M:=M_{C}:=N.u(C)\leq\mathrm{Hol}(G)
\]
is a $\mathcal{T}$-group and%
\[
T:=T_{C}:=\tau(C)\leq\mathrm{Aut}(G)
\]
is a free abelian group, acting trivially on both $C$ and $u(C)$, and acting
as $C_{s}^{\ast}$ on $N$. Note that $\ker\tau=C\cap N$ (because $N$ is the
Fitting subgroup of $G$), so $T\cong C/(C\cap N)\cong G/N$.

As $T$ centralizes $u(C)$ it acts faithfully as a group of semisimple
automorphisms on $M$, and we consider $M.T\cong M\rtimes T$ as a subgroup of
$\mathrm{Hol}(M)$. Clearly $M=\mathrm{Fit}(MT)$.

We have%

\begin{equation}
G=NC\leq Nu(C)\tau(C)=MT\leq\mathrm{Hol}(M), \label{Gin SSS}%
\end{equation}
and
\begin{equation}
MG=MC=M\rtimes T=G\rtimes T. \label{M.TvsG.T}%
\end{equation}
This group is the \emph{semisimple splitting} of $G$ associated to the chosen
subgroup $C\in\mathcal{X}^{\ast}(G,N)$.

Recall the embedding%
\begin{align*}
\beta_{M}:\mathrm{Hol}(M)  &  \rightarrow\mathrm{GL}_{n}(\mathbb{Z})\\
M\beta_{M}  &  \leq\mathrm{U}_{n}(\mathbb{Z})
\end{align*}
where $n=n(M).$ Restricting $\beta_{M}$ to $MG$ gives an embedding%
\[
\beta_{G,C}:MG\rightarrow\mathrm{GL}_{n}(\mathbb{Z}),
\]
$n=n(M_{C}):=n(G,C)$. In fact we have

\begin{lemma}
$n(M_{C})$ is independent of $C\in\mathcal{X}^{\ast}(G,N)$
\end{lemma}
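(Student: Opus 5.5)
We need to show that $n(M_C)$ does not depend on the choice of $C\in\mathcal{X}^{\ast}(G,N)$. The plan is to show that for any two $C,D\in\mathcal{X}^{\ast}(G,N)$ the $\mathcal{T}$-groups $M_C$ and $M_D$ are commensurable inside a common ambient group, in fact have isomorphic Mal'cev completions. Since $n(M)$ is an invariant of $M$ determined by its structure, it then remains to check that this invariant is stable under the relevant commensurability.

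The cleanest route would be to show $M_C\cong M_D$ outright. First, by Proposition \ref{compconj} applied to $G$, $N$ and $C$, every $D\in\mathcal{X}(G,N)$ has the form $D=C^{xy}$ with $x\in X\subset N^{\mathbb{Q}}$ and $y\in N$. Since $C^{y}$ gives $M_{C^y}=M_C^{y}\cong M_C$, we may assume $D=C^{x}$ with $x\in N^{\mathbb{Q}}$. Conjugation by $x$ is an automorphism of $N^{\mathbb{Q}}G$ carrying $C$ to $D$. By naturality of the Jordan decomposition, $(c^{x})^{\ast}_{s}=x^{-1}c^{\ast}_{s}x$ on $N^{\mathbb{Q}}$, so $\tau_D(c^x)=\tau_C(c)^{x^\ast}$ and $u_D(c^x)=u_C(c)^{x}$, computed inside $\mathrm{Hol}(N^{\mathbb{Q}}G)$.

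Hence $M_D=N\,u_D(D)=N\,u_C(C)^{x}$, while $M_C^{x}=N^{x}u_C(C)^{x}$. These two groups have the same Mal'cev completion, namely $(N^{\mathbb{Q}}u_C(C))^{x}$, and both sit as full subgroups in it. So $M_C$ and $M_D$ are commensurable $\mathcal{T}$-subgroups of one radicable nilpotent group, with the same Lie algebra $\mathcal{L}_{\mathbb{Q}}$.

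The main obstacle is that $n(M)$, the rank of $\mathbb{Z}M/I_M$, is in general defined from $M$ itself and is not obviously a commensurability invariant. Following \cite{S1} Chapter 5, \S B, I expect $n(M)$ to depend only on the Mal'cev completion $M^{\mathbb{Q}}$, for instance as $\dim_{\mathbb{Q}}$ of the corresponding quotient of the universal enveloping algebra of $\mathcal{L}_{\mathbb{Q}}(M)$ by the appropriate power of its augmentation ideal, given the nilpotency class and Hirsch length. Verifying that description is the step I would check first. If it fails, the fallback is to use the lattice hypothesis: $N$ is a lattice group and $u(C)$ acts unipotently. I would then try to show directly that $x^{\ast}$ normalizes $N$ up to a conjugation by $N$, which gives $M_D\cong M_C$ on the nose, using that $c_s^\ast$ fixes $N$ for both $C$ and $D$.
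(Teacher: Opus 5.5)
Your proposal follows essentially the paper's proof. You reduce via Proposition \ref{compconj} to $D=C^{x}$ with $x\in N^{\mathbb{Q}}$, transport the splitting data by conjugation to see that $M_D$ and $M_C$ are abstractly commensurable, and then use that $n$ is a commensurability invariant. The paper does the transport in $\mathrm{Hol}(G_1)$, $G_1=N^{1/m}G$, via $G_1T_{C^{x}}=(G_1T_C)^{x^{\ast}}$. The invariance of $n$ that you flag as the main obstacle is precisely \cite{S1} Chapter 5, Theorem 3, which the paper simply cites, so it is a standard fact and no fallback is needed.

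Two points of precision. First, the transport must be by the inner automorphism $x^{\ast}$ viewed as an element of the holomorph, i.e.\ $u_D(c^{x})=u_C(c)^{x^{\ast}}$. Conjugation by the group element $x$ does not preserve $\mathrm{Aut}(N^{\mathbb{Q}}G)$ inside $\mathrm{Hol}(N^{\mathbb{Q}}G)$, so $u_C(c)^{x}$ is not the right object. Second, $N^{\mathbb{Q}}u_C(C)$ is not radicable, so it is not a Mal'cev completion. Instead of appealing to a common Mal'cev completion, just note that $M_D$ and $M_C^{x^{\ast}}\cong M_C$ both have finite index in $(N^{1/m}u_C(C))^{x^{\ast}}$.
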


\begin{proof}
By Proposition \ref{compconj} it suffices to check that $n(M_{C})=n(M_{C^{y}%
})$ if $y\in N^{\mathbb{Q}}$ and $C^{y}\in\mathcal{X}^{\ast}(G,N)$. Say $y\in
N^{1/m}$ and put $G_{1}=N^{1/m}G.~$Then $M_{C}\leq_{f}M_{1}:=\mathrm{Fit}%
(G_{1}T_{C})$ and $M_{C^{y}}\leq_{f}M_{2}:=\mathrm{Fit}(G_{1}T_{C^{y}})$,
while
\[
G_{1}T_{C^{y}}=(G_{1}T_{C})^{y^{\ast}}\leq G_{1}\rtimes\mathrm{Aut}(G_{1}).
\]
So $M_{1}\cong M_{2}$, whence $M_{C^{y}}$ and $M_{C}$ are abstractly
commensurable groups. The claim now follows from \cite{S1} Chapter 5, Theorem 3.
\end{proof}

\bigskip

From now on, we accordingly write $n_{0}(G)$ for $n(M_{C})$, where
$C\in\mathcal{X}^{\ast}(G,N)$ can be arbitrary.\bigskip

\begin{lemma}
\label{Tdiag}The group $T\beta_{M}$ is diagonalizable.
\end{lemma}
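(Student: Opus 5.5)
Since $T$ is abelian and finitely generated, $T\beta_M$ is diagonalizable (over an algebraic closure, which is presumably the intended meaning) as soon as each element of $T\beta_M$ is a semisimple matrix. Commuting semisimple matrices are simultaneously diagonalizable, and it suffices to check this on a finite generating set $\tau(c_1),\ldots,\tau(c_k)$. So the plan is to show that for each $c\in C$ the matrix $\tau(c)\beta_M\in\mathrm{GL}_n(\mathbb{Z})$ is semisimple.

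Fix $c\in C$ and put $\alpha=\tau(c)|_M\in\mathrm{Aut}(M)$. As noted before (\ref{Gin SSS}), $T$ acts faithfully on $M$ as a group of semisimple automorphisms. It acts as $c_s^\ast$ on $N$ and trivially on $u(C)$. Hence $\alpha$ acts semisimply on $\mathcal{L}_{\mathbb{Q}}(M)$, i.e.\ $\alpha=\alpha_s$ in the sense of the Jordan decomposition of automorphisms of $M^{\mathbb{Q}}$. I would justify this briefly: $\alpha$ is semisimple on $\mathcal{L}_{\mathbb{Q}}(N)$ by construction, and trivial on $\log u(C)$. Since $\mathcal{L}_{\mathbb{Q}}(M)$ is spanned as a Lie algebra by these, $\alpha$ is diagonalizable on a generating subspace. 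A Lie algebra automorphism that is diagonalizable on a set of generators is diagonalizable, because brackets of eigenvectors are eigenvectors. Now apply Lemma \ref{Jordanfor beta} to the $\mathcal{T}$-group $M$ and the automorphism $\alpha$:
\[
(\alpha\beta_M)_s=\alpha_s\beta_M=\alpha\beta_M ,
\]
so $\alpha\beta_M=\tau(c)\beta_M$ is a semisimple matrix.

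The only point needing care, and the expected main obstacle, is identifying the element $\tau(c)$ of $MT\leq\mathrm{Hol}(M)$ with the automorphism $\alpha$ of $M$, so that $\tau(c)\beta_M$ really is $\alpha\beta_M$ as used in Lemma \ref{Jordanfor beta}. Since $\beta_M$ is extended to $\mathrm{Hol}(M)$ by letting $\mathrm{Aut}(M)$ act naturally on $V(M)$, and $T$ sits in $\mathrm{Hol}(M)$ via its action on $M$, this holds by definition. The other potential subtlety is that semisimplicity of $\alpha$ on $\mathcal{L}_{\mathbb{Q}}(M)$ must be established for $M=N.u(C)$ and not just for $N$; this is handled by the generation argument above. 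The commuting semisimple matrices $\tau(c_i)\beta_M$ are then simultaneously diagonalizable, which gives the lemma.
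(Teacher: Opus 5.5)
Your proof is correct and takes the same route as the paper: Lemma \ref{Jordanfor beta} applied to the $\mathcal{T}$-group $M$ shows that each element of $T\beta_M$ is semisimple, and commutativity then gives simultaneous diagonalizability. Your generation argument, showing that $\tau(c)$ acts semisimply on all of $\mathcal{L}_{\mathbb{Q}}(M)$ and not just on $\mathcal{L}_{\mathbb{Q}}(N)$, correctly justifies a point the paper only asserts.
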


\begin{proof}
Lemma \ref{Jordanfor beta} implies that $T\beta$ consists of semisimple
elements, and the result follows since $T\beta$ is abelian.
\end{proof}

\begin{lemma}
\label{diag}Let $T,~T^{\dag}$ be diagonalizable subgroups of $\mathrm{GL}%
_{n}(\mathbb{Z})$. If $\overline{T}^{x}=\overline{T^{\dag}}$ with
$x\in\mathrm{GL}_{n}^{\infty}$ then%
\begin{equation}
T^{x}=T^{\dag}. \label{T-conj}%
\end{equation}

\end{lemma}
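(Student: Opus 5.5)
The plan is to reduce (\ref{T-conj}) to a statement about units of orders in number fields, and then use the fact that finitely generated groups of units carry the congruence topology (Chevalley's theorem). Since $T$ and $T^{\dag}$ play symmetric roles ($\overline{T^{\dag}}^{x^{-1}}=\overline{T}$), it suffices to prove $T^{x}\subseteq T^{\dag}$. Abelian subgroups of $\mathrm{GL}_{n}(\mathbb{Z})$ are finitely generated, so $T^{\dag}\cong\mathbb{Z}^{r}\times F$ with $F$ finite. By the profinite technicalities, $\overline{T^{\dag}}\cong\widehat{T^{\dag}}$, and every element of it has the form $s_{1}^{\lambda_{1}}\cdots s_{r}^{\lambda_{r}}f$ with $\lambda_{i}\in\widehat{\mathbb{Z}}$ and $f\in F$, where $s_{1},\ldots,s_{r}$ are free generators. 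Fix $t\in T$. Then $t^{x}=s_{1}^{\lambda_{1}}\cdots s_{r}^{\lambda_{r}}f$ for some $\lambda$, and I must show that $t^{x}\in\mathrm{GL}_{n}(\mathbb{Z})$. Once that holds, $t^{x}\in\overline{T^{\dag}}\cap\mathrm{GL}_{n}(\mathbb{Z})=T^{\dag}$ by (\ref{subgps}), applied with $T^{\dag}\leq\mathrm{GL}_{n}(\mathbb{Z})$ and Formanek's theorem.

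To put everything inside a commutative ring, let $\mathcal{O}=\mathbb{Z}[T^{\dag}]\subseteq\mathrm{M}_{n}(\mathbb{Z})$. Because $T^{\dag}$ is diagonalizable, $\mathbb{Q}\mathcal{O}$ is a product of number fields $k_{1}\times\cdots\times k_{m}$, and $\mathcal{O}$ is an order in it. Then $T^{\dag}\leq\mathcal{O}^{\ast}$, and $\overline{T^{\dag}}\leq(\mathcal{O}\otimes\widehat{\mathbb{Z}})^{\ast}$. Enlarging $\mathcal{O}$ to the maximal order $\mathcal{O}_{\max}=\prod\mathfrak{o}_{k_{j}}$ changes nothing essential, since $\mathcal{O}$ has finite index in it. Projecting to each factor, the $j$-th component of $t^{x}$ is $\varepsilon_{j}=\prod_{i}u_{ij}^{\lambda_{i}}\cdot f_{j}$, where $u_{ij}\in\mathfrak{o}_{k_{j}}^{\ast}$ is the image of $s_{i}$. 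Moreover $\varepsilon_{j}$ satisfies $g(\varepsilon_{j})=0$, where $g$ is the minimal polynomial of $t$ over $\mathbb{Q}$; this $g$ is separable, monic, and has integer coefficients, and $g(t^{x})=g(t)^{x}=0$. Enlarge each $k_{j}$ so that $g$ splits, and let $R_{j}\subset\mathfrak{o}_{k_{j}}^{\ast}$ be the finite set of its roots. Then every local component $(\varepsilon_{j})_{v}$ lies in $R_{j}$.

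The main step is a global rigidity statement. Let $\Lambda_{j}$ be the finitely generated subgroup of $\mathfrak{o}_{k_{j}}^{\ast}$ generated by the $u_{ij}$, the $f_{j}$, and $R_{j}$. I need: if an element of $\overline{\Lambda_{j}}\subseteq(\mathfrak{o}_{k_{j}}\otimes\widehat{\mathbb{Z}})^{\ast}$ has all its local components in the finite set $R_{j}$, then it is a single global element of $R_{j}$ (the same root at every place), and in particular lies in $\Lambda_{j}$. By Chevalley's theorem the congruence topology on $\Lambda_{j}$ is its full profinite topology, so $\overline{\Lambda_{j}}\cong\Lambda_{j}\otimes\widehat{\mathbb{Z}}$ canonically. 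The plan is to write $\varepsilon_{j}$ as an element of $\Lambda_{j}\otimes\widehat{\mathbb{Z}}$ and compare it with $\rho\otimes1$ for each $\rho\in R_{j}$. Since $R_{j}$ is finite, the places split into finitely many sets $P_{\rho}=\{v:(\varepsilon_{j})_{v}=\rho\}$. I would then use the algebraic structure: $\varepsilon_{j}\rho^{-1}$ vanishes exactly on $P_{\rho}$, while lying in a torsion-free-up-to-finite group of the form $\widehat{\mathbb{Z}}^{r'}\times$finite. I would combine this with the fact that the reduction $\Lambda_{j}\to(\mathfrak{o}_{k_{j}}/\mathfrak{q})^{\ast}$ for a single prime $\mathfrak{q}$ already sees every coordinate, to force all but one $P_{\rho}$ to be empty. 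This is where I expect the real difficulty: an idempotent-type splitting of $\widehat{\mathbb{Z}}$ across places has to be excluded. If that argument fails, my fallback is to use the whole of $T$ simultaneously together with Lemma \ref{faithful}-style Zariski-density considerations, or to argue directly that $t^{x}$ has rational eigenvectors via the char-polynomial identity.

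Once each $\varepsilon_{j}$ is a global unit, $t^{x}$ lies in $\mathcal{O}_{\max}^{\ast}\otimes\mathbb{Q}\cap\mathrm{GL}_{n}(\widehat{\mathbb{Z}})=\mathrm{GL}_{n}(\mathbb{Z})\cap\mathbb{Q}\mathcal{O}$. It follows that $t^{x}\in T^{\dag}$ as explained above, and by symmetry $T^{x}=T^{\dag}$.
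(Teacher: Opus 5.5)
Your reduction is sound and has the same architecture as the paper's proof. The symmetry argument, the identity $\overline{T^{\dag}}\cap\mathrm{GL}_n(\mathbb{Z})=T^{\dag}$, the passage to $\mathbb{Q}\mathcal{O}=\prod k_j$, the observation that every local component of $\varepsilon_j$ is a root of $g$, and the final integrality step are all fine. But the whole content of the lemma sits in your ``main step'': that $\varepsilon_j\in\overline{\Lambda_j}$, with all local components in $R_j$, is the \emph{same} root at every place. You have not proved it. This is exactly where the paper invokes the proof of \cite{GS2}, Lemma 5.4, resting on the main result of \cite{GS4}. After diagonalizing $T^{\alpha},T^{\dag\beta}\leq\mathrm{D}_n(\mathfrak{o}_k)$, that result gives a single permutation $\sigma$, independent of the place, such that $\alpha^{-1}x\beta\sigma^{-1}$ centralizes $T^{\alpha}$. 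Hence $t^{x\beta}=t^{\alpha\sigma}\in\mathrm{D}_n(\mathfrak{o}_k)\cap\overline{T^{\dag\beta}}=T^{\dag\beta}$. That place-independent $\sigma$ is precisely your ``same root at every place''.

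The mechanism you sketch cannot supply this step.
\begin{itemize}
\item The abstract structure $\overline{\Lambda_j}\cong\Lambda_j\otimes\widehat{\mathbb{Z}}\cong\widehat{\mathbb{Z}}^{r'}\times(\text{finite})$, together with a jointly faithful family of coordinate maps, does not exclude mixing. In $\widehat{\mathbb{Z}}=\prod_{\ell}\mathbb{Z}_{\ell}$, the idempotent $e_2$ has every coordinate under $\widehat{\mathbb{Z}}\to\mathbb{Z}_{\ell}$ in $\{0,1\}$, both values occur, and yet $e_2\notin\mathbb{Z}$. This is exactly the idempotent splitting you fear.
\item A single prime $\mathfrak{q}$ does not help either. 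The map $\overline{\Lambda_j}\to\mathfrak{o}_{\mathfrak{q}}^{\ast}$ has a huge kernel, since its image is a finite group times a finitely generated $\mathbb{Z}_p$-module. So knowing which root occurs at $\mathfrak{q}$ tells you nothing about the other places.
\item Your fallbacks share the defect. The identities $g(t^x)=0$ and equality of characteristic polynomials hold place by place, and mixed elements satisfy them. Zariski-density arguments live over a field and cannot see the place-dependence.
\end{itemize}

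What is missing is genuinely arithmetic. It suffices to show that for nontrivial $\nu_1,\ldots,\nu_m\in\overline{\Lambda_j}$ there is a place $v$ with $(\nu_i)_v\neq 1$ for all $i$. Apply this to $\nu_\rho=\varepsilon_j\rho^{-1}$ for $\rho\in R_j$, whose zero sets $P_\rho$ cover all places; it forces some $\nu_\rho=1$. Proving that statement requires two things. First, choose a continuous character of $\overline{\Lambda_j}$ that is nonzero on every non-torsion $\nu_i$; this is possible because the dual $(\mathbb{Q}/\mathbb{Z})^{r'}$ is divisible, hence not a finite union of proper subgroups. Second, realize that character as an $M$-th power-residue character at some prime, via Kummer theory and Chebotarev density in $k_j(\zeta_M,\Lambda_j^{1/M})$. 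This needs care with $\Lambda_j\cap k_j(\zeta_M)^{\ast M}$, since a single character of a finite abelian group need not be nontrivial on several prescribed elements at once. Either carry this out or cite \cite{GS4}. As it stands, the proposal is incomplete at its central step.
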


\begin{proof}
There exist $\alpha,~\beta\in\mathrm{GL}_{n}(k)$, $k$ some algebraic number
field, such that $T^{\alpha},~T^{\beta}\leq\mathrm{D}_{n}(\mathfrak{o}_{k})$.
The proof of \cite{GS2}, Lemma 5.4 shows that for some $\sigma\in
\mathrm{Sym}(n),$ the matrix%
\[
\alpha^{-1}x\beta\sigma^{-1}%
\]
centralizes $T^{\alpha}$ (this matrix sits in the finite adele group over $k$
of $\mathrm{GL}_{n}$, which contains $\mathrm{GL}_{n}^{\infty}$ as well as
$\alpha$ and $\beta$). Then for $t\in T$ we have%
\[
t^{x\beta}=t^{\alpha\sigma}\in\mathrm{D}_{n}(\mathfrak{o}_{k})\cap
\overline{T^{\dag\beta}}=T^{\dag\beta}%
\]
because $T^{\dag\beta}$ is closed in $\mathrm{D}_{n}(\mathfrak{o}_{k})$: in
the profinite topology because $\mathrm{D}_{n}(\mathfrak{o}_{k})$ is a
finitely generated abelian group, and in the congruence topology because this
induces the former. Thus $T^{x}\leq T^{\dag}$ and (\ref{T-conj}) follows by symmetry.

(The first step of this argument depends on some serious number theory, the
main result of \cite{GS4}.)
\end{proof}

\bigskip

\emph{Remark} Almost the same conclusion may be deduced from an apparently
weaker hypothesis $\overline{MG}^{y}=\overline{M^{\dag}G^{\dag}}$; using the
fact that for almost all primes $\mathfrak{p}$ of $\mathfrak{o}_{k},$ the
images $T^{\alpha}\pi_{\mathfrak{p}}$ and $T^{\dag\beta}\pi_{\mathfrak{p}}$
are Hall $p$-complements in the finite groups $(MG)^{\alpha}\pi_{\mathfrak{p}%
}$, $(M^{\dag}G^{\dag})^{\beta}\pi_{\mathfrak{p}}$, one finds $x\equiv
y~(\operatorname{mod}$ $\overline{M})$ that satisfies (\ref{T-conj}). Exercise
for the reader! (cf. \cite{S1} Chapter 9, Proposition 6).

\bigskip

From now on, suppose that $G^{\dag}$ is another lattice-splittable group, and
$N^{\dag}=\mathrm{Fit}(G^{\dag})$.

\begin{proposition}
\label{iso-conj} Let $C\in\mathcal{X}^{\ast}(G,N),~C^{\dag}\in\mathcal{X}%
^{\ast}(G^{\dag},N^{\dag})$ and set $M=M_{C}\leq\mathrm{Hol}(G)$, $M^{\dag
}=M_{C^{\dag}}\leq\mathrm{Hol}(G^{\dag})$.

\emph{i) }Suppose that $n_{0}(G)=n_{0}(G^{\dag})=n$. Then $\beta_{G,C}$ and
$\beta_{G^{\dag},C^{\dag}}$ extend to embeddings%
\[
\beta=\beta_{G,C}:\widehat{MG}\rightarrow\mathrm{GL}_{n}^{\infty},~\beta
^{\dag}=\beta_{G^{\dag},C^{\dag}}:\widehat{M^{\dag}G^{\dag}}\rightarrow
\mathrm{GL}_{n}^{\infty}~.
\]

\emph{ii)} Suppose that $\theta:\widehat{G}\rightarrow\widehat{G^{\dag}}$ is
an isomorphism such that $\overline{C}\theta=\overline{C^{\dag}}$. Then
$n_{0}(G)=n_{0}(G^{\dag})=n$, say, $\theta$ extends to an isomorphism
$\widehat{MG}\rightarrow\widehat{M^{\dag}G^{\dag}}$ such that
\begin{equation}
\overline{M}\theta=\overline{M^{\dag}},~\ \overline{T_{C}}\theta
=\overline{T_{C^{\dag}}}, \label{goodimages}%
\end{equation}
and there exists $x\in\mathrm{GL}_{n}^{\infty}$ such that the following
diagram commutes:%
\begin{equation}%
\begin{array}
[c]{ccc}%
\widehat{MG} & \overset{\beta}{\longrightarrow} & \mathrm{GL}_{n}^{\infty}\\
{\small \theta}\downarrow &  & \downarrow{\small x}^{\ast}\\
\widehat{M^{\dag}G^{\dag}} & \underset{\beta^{\dag}}{\longrightarrow} &
\mathrm{GL}_{n}^{\infty}%
\end{array}
. \label{theta-x*}%
\end{equation}
In particular,%
\begin{equation}
(\overline{G\beta})^{x}=\overline{G^{\dag}\beta^{\dag}}. \label{Gbar-cong}%
\end{equation}

\end{proposition}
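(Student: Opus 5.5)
For part (i), I would use the fact recalled in \S\ref{proftechs}: $MG$ is a virtually polycyclic group, and $\beta_{G,C}$ embeds it in $\mathrm{GL}_{n}(\mathbb{Z})$. The closure of $MG\beta_{G,C}$ in $\mathrm{GL}_{n}^{\infty}$ is then identified with $\widehat{MG}$ (\cite{S1} Chap.~10, Prop.~5). So $\beta$ extends to a continuous injective homomorphism $\widehat{MG}\rightarrow\mathrm{GL}_{n}^{\infty}$, and the same argument applies to $\beta^{\dag}$.

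For part (ii), the plan is to extend $\theta$ step by step. By (\ref{M.TvsG.T}) we have $MG=G\rtimes T$ with $T=\tau(C)\cong C/(C\cap N)$, and $T$ acts on $G$ centralizing $C$ and acting as $C_{s}^{\ast}$ on $N$.

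First I transport the semisimple parts. For $c\in\overline{C}$, conjugation by $c$ on $\overline{N}$ has, prime by prime, a Jordan decomposition. Since $\theta$ maps $\overline{N}=\mathrm{Fit}(\widehat{G})$ onto $\overline{N^{\dag}}$ (by (\ref{Fit})) and $\overline{C}$ onto $\overline{C^{\dag}}$, it conjugates $(c^{\ast}|_{\overline{N}})_{s}$ to $((c\theta)^{\ast}|_{\overline{N^{\dag}}})_{s}$, by uniqueness of the Jordan decomposition. To see this, I would transport the automorphisms via an element $y\in\mathrm{GL}^{\infty}$ as in the diagram for $\mathcal{T}$-groups, and use Lemma~\ref{Jordanfor beta} componentwise. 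Now $\tau(c)$, extended continuously to $\widehat{G}$, is the unique automorphism centralizing $\overline{C}$ and agreeing with $c_{s}^{\ast}$ on $\overline{N}$, because $\widehat{G}=\overline{N}\,\overline{C}$. Hence $\theta^{-1}\tau(c)\theta=\tau^{\dag}(c\theta)$, where $\tau^{\dag}$ is extended to $\overline{C^{\dag}}$ by continuity.

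Consequently $\theta$ together with $\overline{T_{C}}\rightarrow\overline{T_{C^{\dag}}}$, $\tau(c)\mapsto\tau^{\dag}(c\theta)$, defines an isomorphism $\widehat{G}\rtimes\overline{T_{C}}\rightarrow\widehat{G^{\dag}}\rtimes\overline{T_{C^{\dag}}}$. These groups are $\widehat{MG}$ and $\widehat{M^{\dag}G^{\dag}}$, since $\widehat{G\rtimes T}=\widehat{G}\rtimes\widehat{T}$ for finitely generated $T$ acting on polycyclic $G$. The extended map sends $u(c)=c\tau(c)^{-1}$ to $u^{\dag}(c\theta)$. Therefore it maps $\overline{M}=\overline{N}\,\overline{u(C)}$ onto $\overline{M^{\dag}}$, which gives (\ref{goodimages}). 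Alternatively, $\overline{M}=\mathrm{Fit}(\widehat{MG})$ by (\ref{Fit}).

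Next I compare $n_{0}$ and find $x$. The restriction gives an isomorphism $\widehat{M}\cong\widehat{M^{\dag}}$ of completions of $\mathcal{T}$-groups, so $n(M)=n(M^{\dag})$ by the discussion before Lemma~\ref{Tgp-Pinv}. That discussion also provides $y\in\mathrm{GL}_{n}^{\infty}$, representing the induced isomorphism $V(M)\otimes\widehat{\mathbb{Z}}\rightarrow V(M^{\dag})\otimes\widehat{\mathbb{Z}}$, with $\theta|_{\overline{M}}$ corresponding to $y^{\ast}$. Set $x=y$. To get (\ref{theta-x*}) on all of $\widehat{MG}\leq\widehat{\mathrm{Hol}(M)}$, note that the image of an element $g$ acts on $V(M)$ through its action on $M$, via the holomorph action. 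Moreover $V(M)$ is functorial for isomorphisms of the completed group ring modulo the closure of $I_{M}$, because $I_{M}$ is characteristic. It follows that $(g\beta)^{y}$ and $(g\theta)\beta^{\dag}$ induce the same action: both are $y$-transports of the action of $g$ on $\overline{M}$ composed with translation. A cleaner alternative is to use Lemma~\ref{faithful}. Both matrices normalize $\overline{M^{\dag}\beta^{\dag}}$ and induce the same automorphism of it, so their quotient centralizes $\overline{M^{\dag}\beta^{\dag}}$ and fixes $1^{\sharp}$, hence is trivial. Then (\ref{Gbar-cong}) follows by restricting to $\widehat{G}$.

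I expect the main obstacle to be the first step: showing that $\theta$ intertwines the profinite semisimple parts, so that $\tau$ is compatible with $\theta$. This requires care with Jordan decomposition over $\widehat{\mathbb{Z}}$, prime by prime. It also requires the fact that $C\in\mathcal{X}^{\ast}$ guarantees these semisimple parts preserve $\mathbb{Z}_{p}\log N$, which is where the lattice hypothesis enters.
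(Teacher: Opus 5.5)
Your proposal is correct and follows essentially the paper's route. You transport the Jordan decomposition through $\theta$ to get $\tau(c)\theta=\tau(c\theta)$ and $u(c)\theta=u(c\theta)$ on $\overline{C}$, deduce (\ref{goodimages}), and then obtain $n_0(G)=n_0(G^{\dag})$ and $x$ from the rigidity of the canonical embedding of the $\mathcal{T}$-group $M$. The one difference is that the paper delegates the final step to an external reference, whereas you spell it out by extending $y$ from $\overline{M}$ to $\widehat{MG}$ as in Lemma \ref{faithful}. That works because $y$ sends $1^{\sharp}$ to $1^{\sharp}$. The point you flag as the main obstacle is settled by Lemma \ref{Tdiag}: for every $c\in\overline{C}$, $c\beta=(\tau(c)\beta)(u(c)\beta)$ is the Jordan decomposition of $c\beta$ in each $\mathrm{GL}_n(\mathbb{Q}_p)$.
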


\begin{proof}
(i) follows from general principles. For (ii): as $\theta$ maps $\overline
{N}=\mathrm{Fit}(\widehat{G})$ to $\overline{N^{\dag}}=\mathrm{Fit}%
(\widehat{G^{\dag}})$ and $\overline{C}$ to $\overline{C^{\dag}},$ it will map
the image of $\overline{C}^{\ast}$ in $\mathrm{Aut}(\overline{N})$ to the
image of $\overline{C^{\dag}}^{\ast}$ in $\mathrm{Aut}(\overline{N^{\dag}}),$
and this mapping will respect the Jordan decomposition (cf. \cite{S1} Chapter
10, Proposition 8). We can then verify that for $c\in\overline{C},$%
\[
\tau(c)\theta=\tau(c\theta),~u(c)\theta=u(c\theta)~,
\]
where $\tau$ and $u$ are extended in the natural way to $\overline{C}.$ This
implies (\ref{goodimages}). The rest of (ii) now follows from the proof of
\cite{S1} Chapter 10, Lemma 7, or the argument in \cite{GPS} \S 14.
\end{proof}

\begin{lemma}
\label{C-choices}Let $\{D_{1},\ldots,D_{k}\},$ $\{D_{1}^{\dag},\ldots
,D_{k^{\prime}}^{\dag}\}$ be complete sets of representatives for
$\mathcal{X}^{\ast}(G,N)/G$, respectively $\mathcal{X}^{\ast}(G^{\dag}%
,N^{\dag})/G^{\dag}$. Suppose $\theta:\widehat{G}\rightarrow\widehat{G^{\dag}%
}$ is an isomorphism. Then $k^{\prime}=k,$ and there exists $\rho
\in\mathrm{Sym}(k)$ such that%
\begin{equation}
(\overline{D_{i}}^{x_{i}})\theta=\overline{D_{i\rho}^{\dag}}~~~(i=1,\ldots,k)
\label{Diperm}%
\end{equation}
for some $x_{1},\ldots,x_{k}\in\overline{N}$.
\end{lemma}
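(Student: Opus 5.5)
The plan is to show that $\theta$ induces a bijection $\mathcal{X}^{\ast}(\widehat{G},\overline{N})/\widehat{G}\rightarrow\mathcal{X}^{\ast}(\widehat{G^{\dag}},\overline{N^{\dag}})/\widehat{G^{\dag}}$, and then to transport this back to the discrete groups using Lemma \ref{X*conj}. The first step uses (\ref{Fit}): $\mathrm{Fit}(\widehat{G})=\overline{N}$ and $\mathrm{Fit}(\widehat{G^{\dag}})=\overline{N^{\dag}}$. Since $\theta$ is an isomorphism, it follows that $\overline{N}\theta=\overline{N^{\dag}}$.

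Next I check that $\theta$ maps $\mathcal{X}^{\ast}(\widehat{G},\overline{N})$ onto $\mathcal{X}^{\ast}(\widehat{G^{\dag}},\overline{N^{\dag}})$. Membership in $\mathcal{X}(\widehat{G},\overline{N})$ is defined purely group-theoretically (a maximal closed nilpotent supplement to $\overline{N}$), so it is preserved by $\theta$. For the starred condition I follow the argument in the proof of Proposition \ref{iso-conj}(ii). The restriction $\theta|_{\overline{N}}:\overline{N}\rightarrow\overline{N^{\dag}}$ is realized, as in \S\ref{proftechs}, by some $y^{\ast}$ with $y\in\mathrm{GL}_{n}^{\infty}$. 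It therefore induces an isomorphism $\mathcal{L}_{\mathbb{Q}_{p}}(N)\rightarrow\mathcal{L}_{\mathbb{Q}_{p}}(N^{\dag})$ carrying $\mathbb{Z}_{p}\log N$ onto $\mathbb{Z}_{p}\log N^{\dag}$, because it maps $\widehat{N}_{p}$ onto $\widehat{N^{\dag}}_{p}$ and the groups are lattice groups. Conjugating by an isomorphism respects the Jordan decomposition. Hence for $c\in E$ the automorphism $(c^{\ast})_{p,s}$ fixes $\mathbb{Z}_{p}\log N$ if and only if $((c\theta)^{\ast})_{p,s}$ fixes $\mathbb{Z}_{p}\log N^{\dag}$ (cf. \cite{S1} Chapter 10, Proposition 8). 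Finally, $\theta$ obviously sends $\widehat{G}$-conjugacy classes to $\widehat{G^{\dag}}$-conjugacy classes. Together these give the claimed bijection $\theta_{\ast}$ on conjugacy classes.

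By Lemma \ref{X*conj}, the classes of $\overline{D_{1}},\ldots,\overline{D_{k}}$ are exactly the distinct elements of $\mathcal{X}^{\ast}(\widehat{G},\overline{N})/\widehat{G}$, and similarly for the daggered groups. Here $\mathcal{X}(G,N)\neq\varnothing$ holds because $G$ is splittable. Composing the bijection of Lemma \ref{X*conj}, then $\theta_{\ast}$, then the inverse of the corresponding bijection for $G^{\dag}$, yields $k'=k$ and a permutation $\rho$ such that $(\overline{D_{i}})\theta$ is $\widehat{G^{\dag}}$-conjugate to $\overline{D_{i\rho}^{\dag}}$. Equivalently, $\overline{D_{i}}^{g_{i}}\theta=\overline{D_{i\rho}^{\dag}}$ for some $g_{i}\in\widehat{G}$. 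To replace $g_{i}$ by an element of $\overline{N}$, use $\widehat{G}=\overline{N}\,\overline{D_{i}}$ and write $g_{i}=d_{i}x_{i}$ with $d_{i}\in\overline{D_{i}}$ and $x_{i}\in\overline{N}$. Then $\overline{D_{i}}^{g_{i}}=\overline{D_{i}}^{x_{i}}$, which gives (\ref{Diperm}).

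The main point needing care is the preservation of the starred condition, that is, that the $p$-adic semisimple parts match under $\theta$ and that the lattices $\mathbb{Z}_{p}\log N$ correspond. I would handle this by citing the Jordan-decomposition compatibility already invoked in Proposition \ref{iso-conj} together with the lattice property. Everything else is formal bookkeeping with Lemma \ref{X*conj}.
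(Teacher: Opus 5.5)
Your proposal is correct and follows the paper's route. The paper's one-line proof likewise combines Lemma \ref{X*conj} with the fact that an isomorphism $\theta$ carries $\mathcal{X}^{\ast}(\widehat{G},\overline{N})$ onto $\mathcal{X}^{\ast}(\widehat{G^{\dag}},\overline{N^{\dag}})$, then uses $g\in\overline{D}x$ with $x\in\overline{N}$ to replace the conjugating element; you just spell out the $\theta$-invariance of the starred condition, which the paper leaves implicit.
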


\begin{proof}
Follows from Lemma \ref{X*conj}, noting as before that if $\overline{D}%
\in\mathcal{X}(\widehat{G},\overline{N})$ and $g\in\widehat{G}$ then
$\overline{D}^{g}=\overline{D}^{x}$ where $x\in\overline{N}$ and
$g\in\overline{D}x$.
\end{proof}

\begin{corollary}
\label{TandTdag}\bigskip Suppose that $\theta:\widehat{G}\rightarrow
\widehat{G^{\dag}}$ is an isomorphism. Then%
\begin{equation}
(\overline{N}G)\theta=\overline{N^{\dag}}G^{\dag}. \label{NbarGetc}%
\end{equation}

\end{corollary}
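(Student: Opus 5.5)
We first reduce to the case where $\theta$ carries a chosen $\overline{C}$ to some $\overline{C^{\dag}}$. Fix $C=D_{1}\in\mathcal{X}^{\ast}(G,N)$. By Lemma \ref{C-choices} there are $x_{1}\in\overline{N}$ and $C^{\dag}:=D_{1\rho}^{\dag}\in\mathcal{X}^{\ast}(G^{\dag},N^{\dag})$ with $(\overline{C}^{x_{1}})\theta=\overline{C^{\dag}}$. Set $\theta'=x_{1}^{\ast}\circ\theta$, so that $\overline{C}\theta'=\overline{C^{\dag}}$. Since $x_{1}\in\overline{N}$, we have $(\overline{N}G)x_{1}^{\ast}=\overline{N}G$. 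Hence it suffices to prove (\ref{NbarGetc}) for $\theta'$, and we may assume $\overline{C}\theta=\overline{C^{\dag}}$.

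Now Proposition \ref{iso-conj}(ii) applies. We get $n_{0}(G)=n_{0}(G^{\dag})=n$, an extension of $\theta$ to $\widehat{MG}\to\widehat{M^{\dag}G^{\dag}}$ with $\overline{M}\theta=\overline{M^{\dag}}$ and $\overline{T}\theta=\overline{T^{\dag}}$, and some $x\in\mathrm{GL}_{n}^{\infty}$ realizing $\theta$ as $x^{\ast}$ via $\beta,\beta^{\dag}$. In particular $(\overline{T\beta})^{x}=\overline{T^{\dag}\beta^{\dag}}$. By Lemma \ref{Tdiag}, both $T\beta$ and $T^{\dag}\beta^{\dag}$ are diagonalizable subgroups of $\mathrm{GL}_{n}(\mathbb{Z})$. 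So Lemma \ref{diag} gives $(T\beta)^{x}=T^{\dag}\beta^{\dag}$, that is, $T\theta=T^{\dag}$. This is the key rigidity input: the closure condition is upgraded to an equality of discrete groups.

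Next we recover $\overline{N}G$ from $\overline{M}$ and $T$. By (\ref{M.TvsG.T}) we have $MG=G\rtimes T=M\rtimes T$, so $\widehat{MG}=\overline{M}\,\overline{T}$. We claim
\[
\overline{N}G\cdot T=\overline{M}T .
\]
Indeed $G\le MT$, and $\overline{N}\le\overline{M}$, so $\overline{N}GT\le\overline{M}T$. Conversely $M=Nu(C)$ and $u(c)=c\tau(c)^{-1}\in GT$, so $M\le GT$. Thus $\overline{M}\le\overline{GT}$, which in the semidirect product $\widehat{G}\rtimes\overline{T}$ equals $\widehat{G}\,\overline{T}$. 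We still need $\overline{M}\le\overline{N}GT$. For this, note that $\overline{M}=\overline{N}\,\overline{u(C)}$ and that $u$ extends continuously to $\overline{C}$, with $u(c)=c\tau(c)^{-1}$. Since $\widehat{G}=\overline{N}\,\overline{C}$ and $\overline{C}/(\overline{C}\cap\overline{N})\cong\widehat{G/N}$, this does not immediately land in $\overline{N}GT$. So we instead intersect with $\widehat{G}$:
\[
\overline{N}G=\widehat{G}\cap\overline{M}T .
\]
For "$\supseteq$": write $g=mt\in\widehat{G}$ with $m\in\overline{M}$ and $t\in T$. Modulo $\overline{M}$, the group $\widehat{MG}/\overline{M}\cong\overline{T}$ contains $G\overline{M}/\overline{M}=T\overline{M}/\overline{M}$, using $MG=MT$ and $T\cong G/N$. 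So $t\equiv g_{0}\pmod{\overline{M}}$ for some $g_{0}\in G$. Then $g g_{0}^{-1}\in\widehat{G}\cap\overline{M}=\overline{G\cap M}=\overline{N}$ by (\ref{subgps}), using $G\cap M=N$ because $N=\mathrm{Fit}(G)$. For "$\subseteq$": $\overline{N}\le\overline{M}$ and $G\le MT$. Since $\theta$ preserves $\widehat{G}$, $\overline{M}$ and $T$, applying $\theta$ to this identity gives $(\overline{N}G)\theta=\widehat{G^{\dag}}\cap\overline{M^{\dag}}T^{\dag}=\overline{N^{\dag}}G^{\dag}$.

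The main obstacle is the last step, in particular the identities $G\cap M=N$, $\widehat{G}\cap\overline{M}=\overline{N}$, and $\widehat{G}\cap\overline{M}T=\overline{N}G$ inside $\widehat{MG}$. These need (\ref{subgps}) applied in the polycyclic group $MG$, together with the fact that $\widehat{G}$ embeds as $\overline{G}$ in $\widehat{MG}$. The earlier steps are direct applications of Lemma \ref{C-choices}, Proposition \ref{iso-conj} and Lemmas \ref{Tdiag}, \ref{diag}.
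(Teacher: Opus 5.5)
Your proof is correct and follows the paper's own argument. You conjugate by an element of $\overline{N}$ to get $\overline{C}\theta=\overline{C^{\dag}}$, extend via Proposition \ref{iso-conj}, and use Lemmas \ref{Tdiag} and \ref{diag} to obtain $T\theta=T^{\dag}$. You then transport the identity $\overline{N}G=\overline{G}\cap\overline{M}T$, which the paper derives from $\overline{M}G=\overline{M}T$ and Dedekind's law, just as your elementwise check does.

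You should delete the abandoned claim $\overline{N}G\cdot T=\overline{M}T$, because it is false whenever $G\neq N$. One checks that $\overline{N}GT\cap\overline{M}=\overline{N}M$, which has infinite index in $\overline{M}$; for example, $u(c)^{\lambda}$ with $\lambda\in\widehat{\mathbb{Z}}\smallsetminus\mathbb{Z}$ lies in $\overline{M}T$ but not in $\overline{N}GT$. So only the identity intersected with $\overline{G}$ is true, and that is all the proof needs.
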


\begin{proof}
By Proposition \ref{C-choices} there exist $C\in\mathcal{X}^{\ast}(G,N)$,
$C^{\dag}\in\mathcal{X}^{\ast}(G^{\dag},N^{\dag})$ and $y\in\overline{N}$ such
that $\overline{C}^{y}\theta=\overline{C^{\dag}}$. Write $M=M_{C},~T=T_{C}$
etc. Then $\theta_{1}:=y^{\ast}\circ\theta$ extends to an isomorphism
$\widehat{MG}\rightarrow\widehat{M^{\dag}G^{\dag}}$. Recall that $MG=MT$ and
$M\cap G=N$, whence $\overline{N}G=\overline{M}G\cap\overline{G}=\overline
{M}T\cap\overline{G}.$ Now (\ref{theta-x*}) and Lemma \ref{diag} give
\begin{align*}
(\overline{N}G)\theta_{1}=(\overline{M}T)\theta_{1}\cap\overline{G}\theta &
_{1}=\overline{M^{\dag}}T^{\dag}\cap\overline{G^{\dag}}\\
&  =\overline{M^{\dag}}G^{\dag}\cap\overline{G^{\dag}}=(\overline{M^{\dag}%
}\cap\overline{G^{\dag}})G^{\dag}=\overline{N^{\dag}}G^{\dag}.
\end{align*}
The result follows since $(\overline{N}G)^{y}=\overline{N}G$.
\end{proof}

\subsection{\bigskip Proposition \ref{finiteexrtrabit}\label{mainproppf}}

We have to generalize Corollary \ref{TandTdag}. Specifically, $G$
and$~G^{\dag}$ are virtually polycyclic groups, $G_{0}\leq G$ and $G_{0}%
^{\dag}\leq G^{\dag}$ are characterisitc lattice-splittable subgroups of
finite index, and $\theta:\widehat{G}\rightarrow\widehat{G^{\dag}}$ is an
isomorphism that maps $\widehat{G_{0}}$ onto $\widehat{G_{0}^{\dag}}$. Also
$N=\mathrm{Fit}(G_{0}),~N^{\dag}=\mathrm{Fit}(G_{0}^{\dag})$.

\bigskip The aim is to prove that there exists $v\in\overline{G_{0}}$ \ such
that $\Theta:=v^{\ast}\circ\theta$ satisfies%
\begin{equation}
(\overline{N}G)\Theta=\overline{N^{\dag}}G^{\dag}. \label{finalgoodthing}%
\end{equation}

\begin{proof}
Proposition \ref{TandTdag} shows that $(\overline{N}G_{0})\theta
=\overline{N^{\dag}}G_{0}^{\dag}$. Put $L=(\overline{N^{\dag}}G^{\dag}%
)\theta^{-1}$. Then
\[
(L\cap\overline{G_{0}})\theta=\overline{N^{\dag}}G^{\dag}\cap\overline
{G_{0}^{\dag}}=\overline{N^{\dag}}(G^{\dag}\cap\overline{G_{0}^{\dag}%
})=\overline{N^{\dag}}G_{0}^{\dag}=(\overline{N}G_{0})\theta
\]
and%
\[
\widehat{G}\theta=\widehat{G^{\dag}}=\overline{G_{0}^{\dag}}G^{\dag}%
=\overline{G_{0}^{\dag}}.\overline{N^{\dag}}G^{\dag}=\overline{G_{0}}%
\theta.L\theta.
\]
So%
\[
\overline{G_{0}}L=\widehat{G},~~\overline{G_{0}}\cap L=\overline{N}G_{0}.
\]
As both $G_{0}$ and $N$ are normal in $G$ and $\overline{N}G_{0}%
\vartriangleleft\overline{G_{0}}$ (since $\overline{G_{0}}/\overline{N}$ is
abelian), we see that $\overline{N}G_{0}\vartriangleleft\overline{G}$. Thus
both $\overline{N}G/\overline{N}G_{0}$ and $L/\overline{N}G_{0}$ are
complements to $\overline{G_{0}}/\overline{N}G_{0}$ in $\overline{G}%
/\overline{N}G_{0}$. Now $\overline{G_{0}}/\overline{N}G_{0}\cong%
(\widehat{\mathbb{Z}}/\mathbb{Z})^{d}$ is a divisible abelian group (where
$G_{0}/N\cong\mathbb{Z}^{d}$); consequently $H^{1}(\overline{G}/\overline
{G_{0}},\overline{G_{0}}/\overline{N}G_{0})=0$ as $\overline{G}/\overline
{G_{0}}\cong G/G_{0}$ is finite (\cite{B}, Corollary 10.2). This implies that
the two complements just indicated are conjugate, which translates directly to
the claim.
\end{proof}

\subsection{Finding lattice-splittable subgroups\label{finding}}

Now we prove Proposition \ref{first step}. Given virtually polycyclic groups
$G$ and $G^{\dag}$, by finite presentations, we have to construct
characteristic lattice-splittable subgroups $G_{0}\vartriangleleft G$ and
$G_{0}^{\dag}\vartriangleleft G^{\dag}$, of finite index, in such a way that
every isomorphism $\widehat{G}\rightarrow\widehat{G^{\dag}}$ maps
$\overline{G_{0}}$ onto $\overline{G_{0}^{\dag}}$.

I will say that a property or invariant of a virtually polycyclic group $G$ is
$\mathcal{P}$\emph{-invariant }if it is preserved on passing from $G$ to any
other virtually polycyclic group $G^{\dag}$ satisfying $\widehat{G^{\dag}%
}\cong\widehat{G}$. The assignment $G\longmapsto G_{0}$ that we are about to
describe can in fact be made $\mathcal{P}$-invariant, but this is much deeper
than we need; as far as I know, it depends essentially on Theorem \ref{genus}
(see \cite{S1} Chapter 10, \S E). It would be interesting if a direct,
elementary, proof could be found: this would in turn significantly simplify
the proof of Theorem \ref{genus}.

Instead, I shall assume here that two specific groups $G$ and $G^{\dag}$ are
given, and that $\widehat{G}\cong\widehat{G^{\dag}}$. (To be strictly logical,
If $\widehat{G}\ncong\widehat{G^{\dag}}$ we just carry out the procedure
twice, once for $G$ and once for $G^{\dag}$.)

\bigskip

\emph{Step 1.} Set $F=\mathrm{Fit}(G)$. Let $e$ be the least positive integer
such that $N_{1}:=F^{e}$ is torsion-free. Determine $n(N_{1})$ (see
\cite{GS1}, \S 8.3). According to \cite{S1} Chapter 6, Theorem 5, there is a
number $\alpha,$ depending only on $n(N_{1}),$ such that $N:=N_{1}^{\alpha}$
is a lattice group. Let $f$ be the least positive integer such that $NG^{f}/N$
is is free abelian. Set $G_{1}=NG^{f}$.

According to \cite{S3}, Proposition 4.6, all of $e,~f,~N,~G_{1}$ can be found
effectively. Note that
\[
\mathrm{Fit}(G_{1})=F\cap G_{1}=N
\]
since $G_{1}/N$ is free abelian while $F/N$ has finite exponent.

A straightforward application of the `Profinite technicalities' shows that
$\mathrm{Fit}$ and $e$ are $\mathcal{P}$-invariants, and $n(N_{1})$ is
$\mathcal{P}$-invariant by \cite{S1}, Chapter 10, Lemma 1. So $N$ and $f$ are
also $\mathcal{P}$-invariants of $G$. Setting $F^{\dag}=\mathrm{Fit}(G^{\dag
}),~N^{\dag}=(F^{\dag e})^{\alpha}$ and $G_{1}^{\dag}=N^{\dag}G^{\dag f}$ we
have
\begin{align*}
N^{\dag}  &  =\mathrm{Fit}(G_{1}^{\dag})\text{ is a lattice }\mathcal{T}%
\text{-group}\\
&  G_{1}^{\dag}/N^{\dag}\text{ \ is free abelian,}%
\end{align*}
and every isomorphism \bigskip$\Theta:\widehat{G}\rightarrow\widehat{G^{\dag}%
}$ satisfies $\overline{G_{1}}\Theta=\overline{G_{1}^{\dag}}~,~~\overline
{N}\Theta=\overline{N^{\dag}}$.

\emph{Step 2.} Let $h$ be the least positive integer such that

\begin{itemize}
\item $NG_{1}^{h}=ND$ for some maximal nilpotent subgroup $D$ of $G_{1},$

\item $c_{s}^{\ast}$ fixes $N$ for each $c\in D.$
\end{itemize}

Define $h^{\dag},$ $D^{\dag}$ likewise relative to $G_{1}^{\dag},$ $N^{\dag}$.

Again, \cite{S3}, Proposition 4.6, shows that all of these can be found
effectively. Now set $q=\operatorname{lcm}(h,h^{\dag})$,%
\[
G_{0}=NG_{1}^{q},~~~G_{0}^{\dag}=N^{\dag}G_{1}^{\dag q}~~.
\]
Let $C$ be a maximal nilpotent subgroup of $G_{0}$ containing $D^{q/q_{1}},$
and let $C^{\dag}$ be a maximal nilpotent subgroup of $G_{0}^{\dag}$
containing $D^{\dag q/q_{2}}$. Then
\[
G_{0}=ND^{q/q_{1}}=NC,
\]
so $C=(N\cap C)D^{q/q_{1}},$ which implies that $C_{s}^{\ast}$ acts on $N$
like a subset of $D_{s}^{\ast}$, so it fixes $N$. Thus $C\in\mathcal{X}^{\ast
}(G_{0},N)$. Similarly $C^{\dag}\in\mathcal{X}^{\ast}(G_{0}^{\dag},N^{\dag})$.

Thus $G_{0}$ and $G_{0}^{\dag}$ are both lattice-splittable.

\bigskip As before, we see that $\overline{G_{0}}\Theta=\overline{G_{0}^{\dag
}}$ for any isomorphism $\Theta:\widehat{G}\rightarrow\widehat{G^{\dag}}$.

\section{Going Diophantine\label{dio}}

It remains to establish Proposition \ref{equns}.

\bigskip

\emph{Data}

\begin{itemize}
\item Virtually polycyclic groups
\[
G^{\dag},~~G=\left\langle g_{1},\ldots,g_{d}~;~R\right\rangle
\]
where $R$ is a~finite set of words.

\item subgroups%
\[
N^{\dag}=\left\langle a_{1}^{\prime},\ldots,a_{s^{\prime}}^{\prime
}\right\rangle \vartriangleleft G^{\dag},~~N=\left\langle a_{1},\ldots
,a_{s}\right\rangle \vartriangleleft G.
\]

\item an isomorphism $\theta:G/N\rightarrow G^{\dag}/N^{\dag}.$
\end{itemize}

It is assumed that $N$ and $N^{\dag}$ are lattice $\mathcal{T}$-groups, and
that every isomorphism $\widehat{G}\rightarrow\widehat{G^{\dag}}$ sends
$\overline{N}$ onto $\overline{N^{\dag}}$.

We are given words $u_{i}$ such that
\[
a_{i}=u_{i}(g_{1},\ldots,g_{d})~\ \ (i=1,\ldots,s),
\]
and elements $h_{1},\ldots,h_{d}\in G^{\dag}$ such that%

\[
Ng_{i}\theta=N^{\dag}h_{i}~~~(i=1,\ldots,d).
\]
Define $b_{w},~c_{i}\in N^{\dag}$ by%
\begin{align*}
b_{w}  &  :=w(h_{1},\ldots,h_{d})~~(w\in R),\\
c_{i}  &  :=u_{i}(h_{1},\ldots,h_{d})~~(i=1,\ldots,s).
\end{align*}

The task is to construct a Diophantine system $\mathcal{F}$ that is locally
solvable if and only if the following holds: there exist an isomorphism
$\theta_{1}:\overline{N}\rightarrow\overline{N^{\dag}}$ and elements
$v_{1},\ldots,v_{d}\in\overline{N^{\dag}}$ that satisfy%
\begin{align}
w  &  \in R\Longrightarrow w_{\mathbf{h}}^{\prime}(v_{1},\ldots,v_{d}%
)b_{w}=1\tag{3}\label{(3)}\\
a_{i}\theta_{1}  &  =u_{i\mathbf{h}}^{\prime}(v_{1},\ldots,v_{d}%
).c_{i}~~~(i=1,\ldots,d). \tag{4}\label{(4)}%
\end{align}

\bigskip\emph{Introducing co-ordinates} \bigskip

Algorithm E of \cite{GS1} determines $n:=n(N)$ and $n^{\prime}:=n(N^{\dag})$,
and provides explicit monomorphisms $\beta=\beta_{N}:N\rightarrow
\mathrm{U}_{n}(\mathbb{Z})$ and $\beta^{\prime}=\beta_{N^{\dag}}:N^{\dag
}\rightarrow\mathrm{U}_{n^{\prime}}(\mathbb{Z}).$ These have the following
property (see \cite{GPS} Lemma 3.3 or \cite{S1} Chapter 10, Lemma 1, and
\cite{S1} Chapter 5, Theorem 2): if $\widehat{N}\cong\widehat{N^{\dag}}$ then
$n=n^{\prime}$ and for each isomorphism $\varphi:\widehat{N}\rightarrow
\widehat{N^{\dag}}$ there exists $\mathbf{y}=\mathbf{y}(\varphi)\in
\mathrm{GL}_{n}^{\infty}$ such that%
\begin{equation}%
\begin{array}
[c]{ccccc}%
\widehat{N} & ~ & \overset{\beta}{\longrightarrow} & ~ & \overline{N\beta}\\
~ & ~ & ~ & ~ & ~\\
{\small \varphi}\downarrow & ~ & ~ & ~ & \downarrow\mathbf{y}^{\ast}\\
~ & ~ & ~ & ~ & ~\\
\widehat{N^{\dag}} & ~ & \overset{\beta^{\prime}}{\longrightarrow} & ~ &
\overline{N^{\dag}\beta^{\prime}}%
\end{array}
\label{hat-conj}%
\end{equation}
commutes, where the horizontal maps are the natural extensions of
$\beta,~\beta^{\prime}$. Conversely, if $n=n^{\prime}$ and $\overline{N^{\dag
}\beta^{\prime}}$ is conjugate to $\overline{N\beta}$ in $\mathrm{GL}%
_{n}^{\infty}$ then $\widehat{N}\cong\widehat{N^{\dag}}$.

Similarly, each automorphism $\alpha$ of $N^{\dag}$ corresponds via
$\beta^{\prime}$ to the conjugation action of $\mathbf{y}(\alpha)$ on
$N^{\dag}\beta^{\prime}$ for some $\mathbf{y}(\alpha)\in\mathrm{GL}%
_{n^{\prime}}(\mathbb{Z})$ (\cite{S1} Chapter 5, Theorem 2). If $\alpha$ is
given the algorithm finds $\mathbf{y}(\alpha)$ explicitly.

Since $N$ and $N^{\dag}$ are lattice groups the sets $L:=\log(N\beta)$ and
$L^{\prime}:=\log(N^{\dag}\beta^{\prime})$ are full lattices in the Lie
algebras $\mathcal{L}_{\mathbb{Q}}(N),~\mathcal{L}_{\mathbb{Q}}(N^{\dag})$.
They have explicitly known generating sets $\{\log(a_{i}\beta)\mid
i=1,\ldots,s\},~~\{\log(a_{i}^{\prime}\beta^{\prime})\mid i=1,\ldots
,s^{\prime}\}$, so by elementary algebra we may construct $\mathbb{Z}$-bases
$\{e_{1},\ldots,e_{r}\}$ for $L$ and $\{e_{1}^{\prime},\ldots,e_{r^{\prime}%
}^{\prime}\}$ for $L^{\prime}$ (thus $r,~r^{\prime}$ are the dimensions of
$\mathcal{L}_{\mathbb{Q}}(N),~\mathcal{L}_{\mathbb{Q}}(N^{\dag})$ ). Note that
$n!e_{i}\in\mathfrak{u}_{n}(\mathbb{Z}),~n^{\prime}!e_{i}^{\prime}%
\in\mathfrak{u}_{n^{\prime}}(\mathbb{Z})$.

If $r=r^{\prime}$, to a matrix $\mathbf{y}\in\mathrm{GL}_{n}^{\infty}$ that
conjugates $\overline{N\beta}$ to $\overline{N^{\dag}\beta^{\prime}}$ there
corresponds a matrix $\mathbf{z}=\mathbf{z}(\mathbf{y})\in\mathrm{GL}%
_{r}^{\infty}$ such that%
\begin{equation}
\mathbf{y}^{-1}e_{i}\mathbf{y}=\sum\limits_{j=1}^{r}z_{ij}e_{j}^{\prime
}~~~(i=1,\ldots,r). \label{linvsconj}%
\end{equation}

The unknown elements $v_{i}$ of $\overline{N^{\dag}}$ will be represented by
$n^{\prime}\times n^{\prime}$ matrices $\Xi(i)$, setting%
\[
\Xi(i):=v_{i}\beta^{\prime}.
\]
The unknown isomorphism $\theta_{1}:\overline{N}\rightarrow\overline{N^{\dag}%
}$ will be represented by an $n\times n^{\prime}$ matrix $H$; if $\theta_{1}$
exists then $n=n^{\prime}$ and we will set%
\[
H\mathbf{:}\mathbf{=y}(\theta_{1}).
\]

For any word $w$ in $d$ variables, $w_{\mathbf{h}}^{\prime}(v_{1},\ldots
,v_{d})$ is a finite product of terms of the form $v_{l(m)}^{w_{m}(h_{1}%
^{-1},\ldots,h_{d}^{-1})}$, where each $w_{m}$ is a subword of $w$.
Conjugation by $w_{m}(h_{1}^{-1},\ldots,h_{d}^{-1})$ induces an automorphism
$\alpha_{w,m}$ on $N^{\dag}$. If $w$ is given we may compute a matrix
\[
Y_{m}(w):=\mathbf{y}(\alpha_{w,m})\in\mathrm{GL}_{n^{\prime}}(\mathbb{Z})
\]
such that $Y_{m}(w)^{\ast}$ acts on $N^{\dag}\beta^{\prime}$ like $w_{m}%
(h_{1}^{-1},\ldots,h_{d}^{-1})^{\ast}$ on $N^{\dag}$. (In the following, $m$
will range from $1$ to the length of $w,$ so this varies from place to place).

Then for $v_{1},\ldots,v_{d}\in\overline{N^{\dag}}$ we have
\[
w_{\mathbf{h}}^{\prime}(v_{1},\ldots,v_{d})\beta^{\prime}=\prod\limits_{m}%
Y_{m}(w)^{-1}\cdot v_{l(m)}\beta^{\prime}\cdot Y_{m}(w).
\]

Thus (\ref{(3)}) is equivalent to%
\begin{equation}
\left(  \prod\limits_{m}Y_{m}(w)^{-1}\cdot\Xi(l(m))\cdot Y_{m}(w)\right)
\cdot B(w)=1~~~~(w\in R) \label{Beq}%
\end{equation}
\bigskip where $B(w)=b_{w}\beta^{\prime}\in\mathrm{GL}_{n^{\prime}}%
(\mathbb{Z})$. If $n=n^{\prime}$ and $r=r^{\prime}$ then (\ref{(4)}) is
equivalent to%
\begin{equation}
H\cdot\left(  \prod\limits_{m}Y_{m}(w)^{-1}\cdot\Xi(l(m))\cdot Y_{m}%
(w)\right)  \cdot C(i)=A(i)H~~~(i=1,\ldots,s). \label{Ceq}%
\end{equation}
where $C(i)=c_{i}\beta^{\prime}\in\mathrm{GL}_{n^{\prime}}(\mathbb{Z})$ and
$A(i)=a_{i}\beta\in\mathrm{GL}_{n}(\mathbb{Z})$.

\bigskip

\emph{The equations}

\bigskip

Each equation involving matrix unknowns should be interpreted as a family of
of $n^{2}$ equations, one for each component of the indicated matrices. Some
of the `constant' matrices that appear, and the polynomial $\log$, have
non-integral coefficients: to get equations with $\mathbb{Z}$-coefficients
just multiply each side by $n!$.

\bigskip The equations will only be interesting when $n=n^{\prime}$ and
$r=r^{\prime}$: I will call this \emph{the good case}.

\bigskip

\emph{The unknowns}: the entries of the matrices $H$ and $\Xi(1),\ldots
,\Xi(d)$; the entries $\zeta_{ij}$ of an $r\times r^{\prime}$ matrix $Z$,
which in the good case will stand for $\mathbf{z}(H)$ (as in (\ref{linvsconj}%
)); certain $\lambda(i)_{j}~(i=1,\ldots,d,~~j=1,\ldots,r^{\prime}),$ and
single unknowns $\eta_{0},\ \zeta_{0}$.

\bigskip

If we are not in the good case, let $\det H,~\det Z$ denote the determinants
of the biggest top-left minor (just to be specific).

\bigskip

Now $\mathcal{F}(\theta)$ is the following system of equations, in addition to
(\ref{Beq}) and (\ref{Ceq}):%
\begin{align}
n-n^{\prime} &  =0,~~r-r^{\prime}=0\label{e1}\\
\eta_{0}\cdot\det H &  =1,~~\zeta_{0}\cdot\det Z=1\label{e2}\\
e_{i}\cdot H &  =H\cdot\sum_{j=1}^{r}\zeta_{ij}e_{j}^{\prime}~~~~\ (i=1,\ldots
,r)\label{e3}\\
\log\Xi(i) &  =\sum_{j=1}^{r^{\prime}}\lambda(i)_{j}e_{j}^{\prime
}~~~~\ (i=1,\ldots,d).\label{e4}%
\end{align}

The equations (\ref{e1}) have no unknowns, so are solvable if and only if we
are in the good case. In that case, (\ref{e2}) says that $H$ and $Z$ are
invertible matrices, and (\ref{e3}) asserts that conjugation by $H$ effects
the isomorphism represented by $Z$ from $\overline{L}$ to $\overline
{L^{\prime}}$, and hence that it maps $\overline{N\beta}$ onto $\overline
{N^{\dag}\beta^{\prime}}$.

(\ref{e4}) says that $\Xi(i)$ lies in $\overline{N^{\dag}\beta^{\prime}}$ .

It is clear that a local solution to $\mathcal{F}(\theta)$ gives rise to an
isomorphism $\theta_{1}:\overline{N}\rightarrow\overline{N^{\dag}}$ and
elements $v_{1},\ldots,v_{d}\in\overline{N^{\dag}}$ having all the required
properties, and conversely that such a system $(\theta_{1};$ $v_{1}%
,\ldots,v_{d})$ gives rise to a local solution of $\mathcal{F}(\theta)$.

\bigskip
\emph{All Souls College}

\emph{Oxford}

\end{document}